\chardef\bslash=`\\ 
\def\verbatim{\interlinepenalty\@M \@verbatim
  \leftskip\@totalleftmargin\advance\leftskip2pc
  \frenchspacing\@vobeyspaces \@xverbatim}
\newtheorem{thm}{Theorem}[section]
\newtheorem{cor}[thm]{Corollary}
\newtheorem{lem}[thm]{Lemma}
\newtheorem{pro}[thm]{Proposition}
\newtheorem{ex}[thm]{Example}
\newtheorem{que}[thm]{Question}
\begin{document}


\title
{A Note On Separating Function Sets}
\author{Raushan Buzyakova}
\email{Raushan\_Buzyakova@yahoo.com}
\author{Oleg Okunev}
\email{oleg@servidor.unam.mx}
\address{Facultad de Ciencias Físico-Matematicas, Benemérita Universidad Autonoma de Puebla, Apdo postal 1152, Puebla, Puebla CP 72000, Mexico}
\keywords{$C_p(X)$, discrete space, point-separating set, spread, $i$-weight, $\sigma$-product}
\subjclass{54C35, 54E45, 54A25}


\begin{abstract}{
We study separating function sets. We find some necessary and sufficient conditions for $C_p(X)$ or $C_p^2(X)$ to have a point-separating subspace that is a metric space with certain nice properties. One of the corollaries to our discussion is that for a zero-dimensional $X$, $C_p(X)$ has a discrete point-separating space if and only if $C_p^2(X)$ does.
}
\end{abstract}

\maketitle
\markboth{R. Buzyakova and O. Okunev}{A Note On Separating Function Sets}
{ }

\par\bigskip
\section{Introduction}\label{S:introduction}

\par\bigskip
To start our discussion, recall, that $F\subset C^n_p(X)$ is {\it point-separating} if for any distinct $x,y\in X$ there exists $\langle f_1,...,f_n\rangle\in F$ such that
$f_i(x)\not = f_i(y)$ for some $i\leq n$. 
In this paper we are concerned with the following  general problem.
\par\bigskip\noindent
{\bf Problem.} {\it
Let $P$ be a nice property. Describe "$C_p(X)$ (or $C_p^m(X)$) having a point-separating subspace with  $P$"  in terms of the topology of $X$, $X^n$, or $X^\omega$.
}
\par\bigskip\noindent
In this study, $P$ is  the property of being a discrete space, a countable union of discrete subspaces, a metric compactum, or a discrete group. 
We obtain two characterizations of  spaces $X$ for which  $C^2_p(X)$ has a discrete point-separation subspace (Theorems \ref{thm:criterion} and \ref{thm:criterionGCH}, and \ref{thm:criterionsigma}). One of the characterizations  is consistent and may have  a chance for a ZFC proof. We also characterize zero-dmensional $Z$ with point-separating discrete subspaces in $C_p(X)$  (Theorems \ref{thm:criterion0dim} and \ref{thm:criterionGCH0dim}, and \ref{thm:criterionsigma0dim}). Questions of similar nature are quite popular among topologists interested in $C_p$-theory and have been considered in many papers.

In notation and terminology we follow \cite{Eng}. All spaces under consideration are assumed Tychonoff and {\it  infinite}.   By $s(X)$ we denote the supremum of cardinalities of discrete subspaces of $X$. By $iw(X)$ we denote the smallest  weight of a Tychonoff subtopology of $X$. When we say that $D$ is a discrete subspace of $X$, $D$ need not be closed in $X$. By $\sigma_X(x^*)$ we denote the subspace of $X^\omega$ that consists of all points that differ from $x^*$ by finitely many coordinates. Since $\sigma_X(x)$ and $\sigma_X(y)$ are obviously homeomorphic we may simply write $\sigma_X$ and, as usual,  refer to it as {\it $\sigma$-product of $X^\omega$}. A standard neighborhood of $f$ in $C_p(X)$ is in form $U(x_1,...,x_n; B_1,...,B_n)=\{g\in C_p(X): g(x_i)\in B_i\}$, where $B_i$ is open interval with rational endpoints for each $i$. Zero-dimensionality is understood in the sense of dim.

\par\bigskip
\section{Discrete Point-Separating Subspaces}\label{S:discretesubspaces}

\par\bigskip
Our first goal is to find a characterization of those infinite $X$ for which $C_p(X)$ or $C_p^2(X)$ has a point-separating discrete subspace. We  achieve the goal within a wide class of spaces. We start with a few helping lemmas.

The following Lemma is almost identical to Proposition II.5.5 in \cite{Arh} but due to cofinality restrictions we have to prove it using a similar argument. 
\par\bigskip\noindent
\begin{lem}\label{lem:functionspace0} (version of \cite[II.5.5]{Arh})
Assume that $C^2_p(X)$ has a discrete  subspace of size $\tau$. Then the following hold:
\begin{enumerate}
	\item If  $cf(\tau)>\omega$, then
$s(X^n)\geq \tau$ for some $n\in \omega$.
	\item If $cf(\tau)=\omega$, then $s(\sigma_X)\geq \tau$.
\end{enumerate}
\end{lem}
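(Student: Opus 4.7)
The plan is to pass, for each $\alpha < \tau$, from the element $h_\alpha = (f_\alpha, g_\alpha)$ of the discrete family to a standard basic open neighborhood $U_\alpha \subset C_p^2(X)$ that isolates it from the rest of $D$. Such a $U_\alpha$ is determined by a finite set $F_\alpha = \{y_1^\alpha, \dots, y_{n_\alpha}^\alpha\} \subset X$ together with tuples of open rational intervals $(I_i^\alpha)_i, (J_i^\alpha)_i$, namely
\[
U_\alpha = \{(f,g) : f(y_i^\alpha) \in I_i^\alpha \text{ and } g(y_i^\alpha) \in J_i^\alpha,\; 1 \le i \le n_\alpha\}.
\]
There are only countably many possibilities for the triple $(n, (I_i)_i, (J_i)_i)$, so I would partition the index set $\tau$ accordingly into classes $B_{n, I, J}$ on which these parameters are constant.

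The heart of the argument is a transfer claim: inside any one class $B_{n, I, J}$, the tuples $t_\alpha := (y_1^\alpha, \dots, y_n^\alpha) \in X^n$ form a discrete subspace of $X^n$ of cardinality $|B_{n, I, J}|$. For $\alpha \in B_{n, I, J}$, set
\[
W_\alpha = \{(z_1, \dots, z_n) \in X^n : f_\alpha(z_i) \in I_i,\; g_\alpha(z_i) \in J_i,\; 1 \le i \le n\},
\]
which is open in $X^n$ by continuity of $f_\alpha, g_\alpha$. Clearly $t_\alpha \in W_\alpha$ because $h_\alpha \in U_\alpha$; conversely, $t_\beta \in W_\alpha$ for $\beta$ in the same class says exactly $(f_\alpha, g_\alpha) \in U_\beta$, which forces $h_\alpha = h_\beta$ and hence $\alpha = \beta$. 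The same reading shows that $t_\alpha = t_\beta$ implies $U_\alpha = U_\beta$ and again $\alpha = \beta$, so the $t_\alpha$ are pairwise distinct.

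With this transfer in hand, part (1) is immediate: when $\mathrm{cf}(\tau) > \omega$, the countability of the partition forces some single class $B_{n, I, J}$ already to have size $\tau$, and that class witnesses $s(X^n) \ge \tau$. For part (2), with $\mathrm{cf}(\tau) = \omega$ we instead obtain $\tau = \sup_{n, I, J} |B_{n, I, J}|$, so $\sup_n s(X^n) \ge \tau$. Finally, $X^n$ sits inside $\sigma_X$ as the (closed) subspace obtained by padding tuples with $x^*$ in the remaining coordinates, which gives $s(\sigma_X) \ge s(X^n)$ for every $n$, and hence $s(\sigma_X) \ge \tau$.

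I expect the only spot requiring genuine care is the bookkeeping behind the countable partition: one must verify that the set of admissible triples $(n, (I_i)_i, (J_i)_i)$ is really countable (this is precisely why we insist on \emph{rational} endpoints) so that the cofinality dichotomy applies, and that the transfer step loses no cardinality when passing from functions to tuples. Beyond that, everything is routine manipulation of standard neighborhoods in $C_p$-spaces.
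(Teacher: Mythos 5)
Your proposal is correct and follows essentially the same route as the paper: stabilize the countably many possibilities for the interval data of the isolating standard neighborhoods (using $cf(\tau)>\omega$ for part (1)), then transfer the discrete family to the tuples of support points in a finite power of $X$, with the open boxes $\prod f_\alpha^{-1}(I_i)\times\prod g_\alpha^{-1}(J_i)$ witnessing discreteness via the equivalence $t_\beta\in W_\alpha\iff h_\alpha\in U_\beta$. The only cosmetic differences are that you use a common finite support for $f$ and $g$ and argue part (2) directly from the supremum over the countably many classes rather than reducing it to part (1).
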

\begin{proof}
Since part (2)  is an obvious consequence of part (1), we will prove part (1) only.
Let $D\subset C^2_p(X)$ be a $\tau$-sized discrete subspace.
For each $\langle f, g\rangle\in D$ fix standard neighborhoods
$U_f = U_f(x^f_1,...,x^f_{n_f}, I_1^f, ...,I_{n_f}^f)$ and 
$V_g = V_g(y^g_1,...,y^g_{m_g}, J_1^g, ...,J_{m_g}^g)$  such that $U_f\times V_g$ contains $\langle f,g\rangle$ and misses
 $D\setminus \{\langle f,g\rangle\}$.
Since $cf(\tau ) >\omega$ we can find $n^*,m^*\in \omega$, $\langle I_i: i\leq n^*\rangle$,
$\langle J_i: i\leq m^*\rangle$, and a $\tau$-sized $D'\subset D$  such that $n_f=n^*$, $m_g=m^*$,
$\langle I_i^f: i\leq n^*\rangle = \langle I_i: i\leq n^*\rangle$, and $\langle J_i^g: i\leq m^*\rangle = \langle J_i: i\leq m^*\rangle$  for each $\langle f,g\rangle\in D'$. 
We can now conclude that for any distinct $\langle f,g\rangle,\ \langle f',g'\rangle\in D'$, either   
$\langle x_i^f:i\leq n^*\rangle \not = \langle x^{f'}_i:i\leq n^*\rangle$ or
$\langle y_i^g:i\leq m^*\rangle \not = \langle y^{g'}_i:i\leq m^*\rangle$. Therefore, the set
$S=\{\langle x^f_i,...,x_{n^*}^f, y_i^g,...,y^g_{m^*}  \rangle: \langle f,g\rangle \in D' \}$ is $\tau$-sized.
To show that $S$ is a discrete subspace of 
$X^{n^*+m^*}$,  for each $\langle f,g\rangle\in D'$, 
put $U_f = f^{-1}(I_1)\times ...\times f^{-1}(I_{n^*})$ and $V_g = g^{-1}(J_1)\times ...\times g^{-1}(J_{m^*})$. Clearly $U_f\times V_g$ is a neighborhood of $\langle x^f_i,...,x_{n^*}^f, y_i^g,...,y^g_{m^*}  \rangle$ in $X^{n^*+m^*}$. Next, fix $\langle f',g'\rangle\in D'\setminus \{\langle f,g\rangle\}$. By the choice of our neighborhoods, we may assume that $f\not \in U_{f'}$.
Therefore, there exists $i\leq n^*$ such that $f(x^{f'}_i)\not \in I_i$. Therefore, $x_i^{f'}\not \in f^{-1}(I_i)$, which implies that $\langle x^{f'}_i,...,x_{n^*}^{f'}, y_i^{g'},...,y^{g'}_{m^*}  \rangle\not \in U_f\times V_g$.
\end{proof}

\par\bigskip
Note that if $C_p(X)$ or $C^2_p(X)$ has a discrete point-separating subspace of an infinite size $\tau$ , then $\tau \geq iw(X)$. If in addition $cf(\tau)>\omega$, then, by Lemma \ref{lem:functionspace0}, $s(X^n)\geq \tau \geq iw(X)$ for some $n$. Thus, the following statement holds.

\par\bigskip\noindent
\begin{thm}\label{thm:functionspace}
Assume that $C_p(X)$ or $C_p^2(X)$  has a discrete point-separating subspace of size $\tau$ with $cf(\tau)>\omega$. Then
$s(X^n)\geq \tau\geq iw(X)$ for some natural number $n$.
\end{thm}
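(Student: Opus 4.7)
The plan is to assemble the conclusion from two independent observations, exactly as the authors sketch in the paragraph preceding the theorem. The first observation produces the bound $\tau\geq iw(X)$ from point-separation alone (no discreteness or cofinality needed), and the second produces $s(X^n)\geq\tau$ for some $n$ from discreteness together with $cf(\tau)>\omega$ via Lemma \ref{lem:functionspace0}.

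For the inequality $\tau\geq iw(X)$, I would argue through the diagonal map. Suppose first that $F\subset C_p(X)$ is a point-separating subspace of size $\tau$, and consider the diagonal $\Delta_F\colon X\to \mathbb{R}^F$ sending $x$ to $\langle f(x)\rangle_{f\in F}$. Since $F$ separates points, $\Delta_F$ is an injective continuous map into a Tychonoff space of weight $\leq\tau$, so the initial topology that $\Delta_F$ induces on $X$ is a Tychonoff subtopology of the original topology of $X$ and has weight $\leq\tau$. Hence $iw(X)\leq\tau$. For the $C_p^2(X)$ case, if $F\subset C_p^2(X)$ is point-separating of size $\tau$, let $F'\subset C_p(X)$ consist of all coordinate functions of pairs in $F$; then $|F'|\leq 2\tau=\tau$ and $F'$ still separates points of $X$, so the previous argument applies.

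For $s(X^n)\geq\tau$, I would invoke Lemma \ref{lem:functionspace0}(1) directly. If the point-separating discrete subspace lives in $C_p^2(X)$, the lemma applies as stated. If instead it lives in $C_p(X)$, embed $C_p(X)$ homeomorphically into $C_p^2(X)$ by the diagonal $f\mapsto\langle f,f\rangle$; the image of a discrete subspace of size $\tau$ is again a discrete subspace of size $\tau$ in $C_p^2(X)$, and Lemma \ref{lem:functionspace0}(1) gives the required $n$ with $s(X^n)\geq\tau$.

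Combining the two estimates yields $s(X^n)\geq\tau\geq iw(X)$ as claimed. There is no real obstacle here; the statement is essentially a packaging of Lemma \ref{lem:functionspace0} with the standard diagonal embedding trick, and the only small point to verify is the harmless reduction from $C_p^2(X)$ to $C_p(X)$ for the $iw$-bound, and the equally harmless embedding $C_p(X)\hookrightarrow C_p^2(X)$ for the $s$-bound.
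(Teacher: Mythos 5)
Your proof is correct and follows essentially the same route as the paper, which derives the theorem in the paragraph preceding its statement by combining the standard fact that a $\tau$-sized point-separating family yields $iw(X)\leq\tau$ with Lemma \ref{lem:functionspace0}. The only difference is that you make explicit two reductions the paper leaves implicit (passing from pairs to coordinate functions for the $iw$-bound, and embedding $C_p(X)$ into $C_p^2(X)$ via $f\mapsto\langle f,f\rangle$ so that the lemma, stated only for $C_p^2(X)$, applies), both of which are sound.
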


\par\bigskip\noindent
We are now ready to formulate and prove two necessary conditions for $C_p(X)$ and $C_p^2(X)$  to have a point-separating discrete subspace.
\par\bigskip\noindent
\begin{thm}\label{thm:functionspace2}
If $C^2_p(X)$ has a discrete point-separating subspace, then 
$s(\sigma_X)\geq iw(X)$.
\end{thm}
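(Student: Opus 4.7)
The plan is to reduce the statement to the two cases already essentially handled by Lemma \ref{lem:functionspace0} and Theorem \ref{thm:functionspace}, using the elementary fact that point-separation by a set of size $\tau$ in $C_p^2(X)$ forces $iw(X) \leq \tau$.

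First I would let $D \subset C_p^2(X)$ be a discrete point-separating subspace and set $\tau = |D|$. The initial observation is that $\tau \geq iw(X)$: the evaluation map $e_D : X \to \mathbb{R}^{2\tau}$ sending $x$ to $\langle f(x), g(x)\rangle_{(f,g)\in D}$ is injective because $D$ separates points, so $X$ carries a Tychonoff topology of weight at most $\tau$, proving $iw(X) \leq \tau$. This is the only place where the point-separating hypothesis is used.

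Next I would split on the cofinality of $\tau$. If $cf(\tau) = \omega$, then part (2) of Lemma \ref{lem:functionspace0} gives $s(\sigma_X) \geq \tau$ directly, and combining with $\tau \geq iw(X)$ gives the conclusion. If $cf(\tau) > \omega$, then Theorem \ref{thm:functionspace} yields some $n \in \omega$ with $s(X^n) \geq \tau \geq iw(X)$, and it remains only to observe that $X^n$ embeds as a subspace of $\sigma_X$ (fix some $x^* \in X$ and send $(x_1,\dots,x_n)$ to the sequence $(x_1,\dots,x_n, x^*, x^*,\dots)$, which differs from the constant sequence at $x^*$ in finitely many coordinates). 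Discrete subspaces pull back through this embedding, giving $s(\sigma_X) \geq s(X^n) \geq iw(X)$.

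There is no real obstacle here: the two lemmas have already done the heavy lifting, and the mildly new ingredient is just the trivial bound $\tau \geq iw(X)$ plus the embedding $X^n \hookrightarrow \sigma_X$. The writeup is essentially a matter of assembling these three pieces and handling the two cofinality cases uniformly.
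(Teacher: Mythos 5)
Your proof is correct and follows essentially the same route as the paper: both arguments rest on the elementary bound $|D|\geq iw(X)$, on Lemma \ref{lem:functionspace0}, and on the embedding of $X^n$ into $\sigma_X$. The only cosmetic difference is that you case on the cofinality of $|D|$ and invoke part (2) of the Lemma directly, whereas the paper cases on the cofinality of $iw(X)$ (treating the countable case separately) and re-derives part (2) inline by passing to subsets of $D$ of sizes $\tau_n$ with uncountable cofinality.
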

\begin{proof}
Put $\tau = iw(X)$. If $\tau$ is countable, then $X$ has a countable network. Since $X$ is infinite, it contains an infinite countable subspace. Hence, $s(\sigma_X)\geq iw(X)$. 

We now assume that $\tau$ is uncountable. By Theorem \ref{thm:functionspace} we may assume that $cf(\tau) = \omega$. Fix a strictly increasing sequence of cardinals $\tau_n$ of uncountable cofinalities so that $\tau =\sum_n\tau_n$. Since any point-separating subset of $C^2_p(X)$ must have size at least $\tau$, there exists a discrete subset of cardinality $\tau_n$ in $C^2_p(X)$ for each $n$. By
Lemma \ref{lem:functionspace0}, there exists a discrete subset $D_n$ in some finite power of $X$ for each $n$.
Therefore, $s(\sigma_X)\geq \tau$.
\end{proof}

\par\bigskip\noindent
In all future arguments, the cases when a discrete point-separating subspace is finite can be handled as in 
Theorem \ref{thm:functionspace2} and will therefore not be considered.
For our next observation we need Zenor's theorem \cite{Zen} stating that if $s(X\times Y)\leq \tau\geq \omega$ then
either $hl(X)\leq \tau$ or $hd(Y)\leq \tau$.

\par\bigskip\noindent
\begin{thm}\label{thm:functionspace3}
Assume Generalized Continuum Hypothesis. If $C^2_p(X)$ has a discrete point-separating subspace, then 
$s(X^n)\geq iw(X)$ for some $n\in \omega$.
\end{thm}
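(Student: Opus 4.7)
The plan is to put $\tau := iw(X)$ and split according to the size and cofinality of $\tau$. The case $\tau = \omega$ is handled as in Theorem~\ref{thm:functionspace2}, since every infinite Tychonoff space contains an infinite discrete subspace, giving $s(X) \geq \omega = \tau$. The case $cf(\tau) > \omega$ is exactly Theorem~\ref{thm:functionspace}. This reduces the problem to $\tau$ uncountable with $cf(\tau) = \omega$, in which case $\tau$ is necessarily a limit cardinal; this is where GCH must do real work.

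In this remaining case I would argue by contradiction: suppose $s(X^n) < \tau$ for every $n \in \omega$. Put $\kappa := s(X \times X)$, so $\kappa < \tau$. Applying Zenor's theorem to $X \times X$ we get $hl(X) \leq \kappa$ or $hd(X) \leq \kappa$. Invoking the classical Hausdorff cardinal inequalities of de~Groot ($|X| \leq 2^{hl(X)}$) and of Hajnal--Juh\'asz ($|X| \leq 2^{hd(X)}$), we obtain $|X| \leq 2^\kappa$ in either subcase. Under GCH, $2^\kappa = \kappa^+$; since $\tau$ is a limit cardinal strictly greater than $\kappa$, we have $\kappa^+ < \tau$, and so $|X| < \tau$.

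The contradiction now comes from the elementary inequality $iw(X) \leq |X|$ valid for any Tychonoff $X$ (for each of the at most $|X|$ pairs of distinct points, pick a single continuous real-valued separating function). This would give $\tau = iw(X) \leq |X| < \tau$, which is absurd. Hence some $X^n$ must have spread at least $\tau$. The one step that is not routine bookkeeping is the appeal to the two classical Hausdorff cardinal bounds; everything else flows from Zenor's theorem plus GCH collapsing $2^\kappa$ to $\kappa^+$ and the limit-cardinal character of $\tau$. Note that the discrete point-separating hypothesis enters only through $\tau = iw(X)$, so the argument actually yields, under GCH, that $s(X^n) \geq iw(X)$ for some $n$ and every infinite Tychonoff $X$.
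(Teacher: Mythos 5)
Your proof of the theorem as stated is correct and follows the same skeleton as the paper's: reduce to the case where $\tau=iw(X)$ is an uncountable cardinal of countable cofinality (hence singular, in particular a limit cardinal) via Theorem \ref{thm:functionspace}, then derive a contradiction from the assumption $s(X^n)<\tau$ for all $n$ using Zenor's theorem together with GCH. You differ only in the endgame: the paper bounds $iw(X)$ directly, covering the off-diagonal of $X^2$ by $\lambda$ many cozero boxes in the hereditarily Lindel\"of branch (no GCH needed there) and using $w(X^2)\le 2^{d(X^2)}$ in the density branch, whereas you bound $|X|\le 2^\kappa=\kappa^+<\tau$ in both branches and finish with $iw(X)\le |X|$. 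Your version is more uniform and in fact renders Zenor's theorem redundant: the Hajnal--Juh\'asz inequality $|X|\le 2^{s(X)}$ already gives $|X|\le 2^{s(X\times X)}$ with no case split at all.

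One genuine error, though it lies outside the proof proper: your closing claim that the argument yields $s(X^n)\ge iw(X)$ for some $n$ for \emph{every} infinite Tychonoff space under GCH is false. The point-separating hypothesis is used essentially in the reduction when $cf(iw(X))>\omega$ --- Theorem \ref{thm:functionspace} requires a discrete point-separating subspace of $C^2_p(X)$, whose size is then at least $iw(X)$ --- and your GCH computation dispenses with that hypothesis only when $iw(X)$ is a limit cardinal. Indeed, the paper's Example \ref{ex:sspace} (Ivanov's modification of Fedorchuk's compactum, available under $\diamondsuit$ and hence consistently with GCH) is a non-metrizable compactum with $s(X^n)=\omega<w(X)=iw(X)$ for all $n$.
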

\begin{proof}
Put $\tau = iw(X)$.
By Theorem \ref{thm:functionspace} we may assume that $\tau$ is an infinite cardinal of countable cofinality. Assume the contrary. Then $s(X^4) = \lambda   < \tau$. By Zenor's theorem, $hl(X^2)\leq \lambda$ or $d(X^2)\leq \lambda$. If the former is the case, then the off-diagonal part of $X^2$ can be covered by $\lambda$-many functionally closed boxes, which implies that $iw(X)< \tau$. If $d(X^2)\leq \lambda$, then by Generalized Continuum Hypothesis, $w(X^2)$ is at most $2^\lambda<\tau$. Since both cases lead to contradictions, the statement is proved.
\end{proof}

\par\bigskip\noindent
The assumptions in Theorem \ref{thm:functionspace3} prompts the following questions.
\par\bigskip\noindent
\begin{que}\label{que:zfc}
Does Theorem \ref{thm:functionspace3} hold in ZFC?
\end{que}
\par\bigskip
Note that if one can construct a space $X$ such that $s(X^n)=\omega_n$ for all natural numbers $n$ and $iw(X)=\omega_\omega$, then the answer to Question \ref{que:zfc} is a "no".

At this point one may wonder if our study is justified. In other words, are we studying a non-empty class? Let $X$ be an  non-metrizable  compact space such  that $X^n$ is hereditary separable for each $n$. Such a space exists. A consistent example of such a space is Ivanov's modification \cite{Iva} of Fedorchuk's example \cite{Fed}.  Since $X^n$ is hereditarily separable,
by Lemma \ref{lem:functionspace0}, no discrete subspace of $C_p^2(X)$ or $C_p(X)$ is uncountable.  Since $X$ is not submetrizable, we conclude that no countable subspace of $C_p^2(X)$ or $C_p(X)$ is point-separating. Let us summarize this observation as follows.
\par\bigskip\noindent
\begin{ex}\label{ex:sspace}
There exists a consistent example of a compactum $X$ such that neither $C_p(X)$ nor $C^2_p(X)$ has a discrete separating subspace.
\end{ex}

\par\bigskip\noindent
The authors believe that in some models of ZFC, no such example may exist, meaning that any space may have a discrete in itself point-separating function set. 

\par\bigskip\noindent
\begin{que}\label{que:mainexample}
Is there a ZFC example of a space $X$ such that no discrete subspace of $C_p(X)$ ($C_p^n(X)$) is point-separating?
\end{que}

We will next reverse the statement of Theorem \ref{thm:functionspace}, which will bring us to the promised characterizations.

\par\bigskip\noindent
\begin{thm}\label{thm:Xn}
If  $X^n$ has a discrete subspace of size $iw(X)$ for some natural number $n$, then  $C^2_p(X)$ has a point-separating discrete subspace.
\end{thm}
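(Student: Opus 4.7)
The plan is to combine a point-separating family $\{f_\alpha:\alpha<\tau\}\subseteq C(X,[0,1])$ provided by $\tau=iw(X)$ with ``localizing'' bumps built from the discrete subspace $D=\{\bar x^\alpha:\alpha<\tau\}\subseteq X^n$, so that the resulting pairs form a simultaneously discrete and point-separating subspace of $C^2_p(X)$. As in Theorem~\ref{thm:functionspace2}, the case of finite $\tau$ is trivial, so assume $\tau$ infinite. By passing to a $\tau$-sized subfamily of $D$ (pigeonhole over the finitely many patterns of coordinate-equalities within an $n$-tuple), I may assume the coordinates $x^\alpha_1,\ldots,x^\alpha_n$ of each $\bar x^\alpha$ are pairwise distinct; the resulting value of $n$ may shrink, but the hypothesis is preserved. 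For each $\alpha$, pick pairwise-disjoint open sets $W^\alpha_1,\ldots,W^\alpha_n$ in $X$ with $x^\alpha_i\in W^\alpha_i$ and $D\cap(W^\alpha_1\times\cdots\times W^\alpha_n)=\{\bar x^\alpha\}$, and continuous bumps $\phi^\alpha_i:X\to[0,1]$ with $\phi^\alpha_i(x^\alpha_i)=1$ and $\phi^\alpha_i\equiv 0$ on $X\setminus W^\alpha_i$.

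The heart of the construction is the pair
\[
g_\alpha=\sum_{i=1}^n\phi_i^\alpha+\delta f_\alpha,\qquad h_\alpha=\sum_{i=1}^n i\,\phi_i^\alpha,
\]
for a sufficiently small fixed $\delta>0$; let $F=\{(g_\alpha,h_\alpha):\alpha<\tau\}$. I would show discreteness of $F$ using the basic neighborhood
\[
U_\alpha=\bigl\{(g,h)\in C^2_p(X):|g(x^\alpha_i)-1|<\epsilon,\ |h(x^\alpha_i)-i|<\epsilon,\ 1\le i\le n\bigr\},
\]
for an $\epsilon>0$ satisfying $\epsilon+\delta<1/(n+2)$. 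The key is the following \emph{ratio argument}: if $(g_\beta,h_\beta)\in U_\alpha$, then the condition $g_\beta(x^\alpha_i)\approx 1$ forces $x^\alpha_i$ into a unique $W^\beta_{j(i)}$ with $\phi^\beta_{j(i)}(x^\alpha_i)\approx 1$; then $h_\beta(x^\alpha_i)=j(i)\,\phi^\beta_{j(i)}(x^\alpha_i)\approx j(i)$, and comparing with $h_\beta(x^\alpha_i)\approx i$ yields $j(i)=i$ by integrality (this is where the bound on $\epsilon+\delta$ is used). Hence $\bar x^\alpha\in W^\beta_1\times\cdots\times W^\beta_n$, and the discreteness of $D$ gives $\alpha=\beta$. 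This ratio trick compresses the $n$ coordinate-checks of $D$'s discreteness into a single basic open set of $C^2_p(X)$ using only two functions, and it is the step that genuinely exploits the product structure of $C^2_p$.

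Point-separation is provided by the $\delta f_\alpha$ summand in $g_\alpha$: for distinct $x,y\in X$, pick $\alpha$ with $f_\alpha(x)\ne f_\alpha(y)$; when the bumps $\phi_i^\alpha$ vanish at both $x$ and $y$ the difference $g_\alpha(x)-g_\alpha(y)$ reduces to $\delta(f_\alpha(x)-f_\alpha(y))\ne 0$, and when they do not, at least one of $g_\alpha$ or $h_\alpha$ distinguishes $x$ from $y$ unless the specific linear relations $\sum_i i(\phi_i^\alpha(x)-\phi_i^\alpha(y))=0$ and $\sum_i(\phi_i^\alpha(x)-\phi_i^\alpha(y))=-\delta(f_\alpha(x)-f_\alpha(y))$ both hold. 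The main obstacle I anticipate is ruling out the pathological case in which \emph{every} separating $\alpha$ satisfies this coincidence simultaneously for the chosen $\delta$; I would handle it either by allowing $\delta=\delta_\alpha$ to depend on $\alpha$ and choosing each $\delta_\alpha$ in the permissible small interval to avoid its small set of ``bad'' values, or by symmetrically perturbing $h_\alpha$ by an independent scalar multiple of $f_\alpha$ so that the two cancellation equations become algebraically incompatible. Once this genericity is secured, $F$ is discrete and point-separating in $C^2_p(X)$, which completes the proof.
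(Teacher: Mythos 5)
Your discreteness argument is sound: the ``ratio'' neighborhood $U_\alpha$ does force $x^\alpha_i\in W^\beta_i$ for every $i$ and hence $\beta=\alpha$, and this is essentially the same mechanism the paper uses (one coordinate certifies that $x^\alpha_i$ lies in \emph{some} box attached to $\beta$, the other identifies \emph{which} box via an integer label). The genuine gap is exactly the one you flag at the end: point-separation. Neither of your proposed repairs works. For a fixed $\alpha$, the set of ``bad'' values of $\delta_\alpha$ is $\{-(\Phi_\alpha(x)-\Phi_\alpha(y))/(f_\alpha(x)-f_\alpha(y))\}$ taken over all pairs $(x,y)$ on which $h_\alpha$ fails, where $\Phi_\alpha=\sum_i\phi^\alpha_i$; this ranges over up to continuum many pairs and can contain a whole interval, so it is not a small set that a generic $\delta_\alpha$ avoids. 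The second repair ($h_\alpha=\sum_i i\,\phi^\alpha_i+\delta' f_\alpha$) also fails once $n\ge 3$: writing $u_i=\phi^\alpha_i(x)-\phi^\alpha_i(y)$ and $c=f_\alpha(x)-f_\alpha(y)\ne 0$, simultaneous cancellation is the pair of linear conditions $\sum_i u_i=-\delta c$ and $\sum_i i\,u_i=-\delta' c$; since $u$ can have two independent nonzero entries (one from $x\in W^\alpha_i$, one from $y\in W^\alpha_j$ with $i\ne j$) taking essentially arbitrary values in $(0,1]$, for any fixed $\delta,\delta'$ some pair $(i,j)$ admits admissible solutions (for instance $\delta<\delta'<2\delta$ blocks $(1,2)$ but leaves $(2,3)$ solvable), and nothing in the construction prevents those values from being attained. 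Because a given pair $(x,y)$ may be separated by only one $f_\alpha$ in your family, a single such coincidence destroys point-separation.

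The paper avoids this interference by not superimposing the separating function and the bumps additively. It enumerates the $\tau$-many disjoint pairs $\langle A_\alpha,B_\alpha\rangle$ coming from a functionally closed network and builds, for each $\alpha$, a single function $f_\alpha$ whose \emph{sign} does the separating ($f_\alpha<0$ on $A_\alpha$ and $f_\alpha>0$ on $B_\alpha$) and whose \emph{magnitude} does the localizing ($|f_\alpha|>1/3$ only inside the discreteness boxes around $d_\alpha$); the second coordinate $g_\alpha$ then carries the integer labels exactly as your $h_\alpha$ does. Decoupling the two jobs into the sign and the magnitude of one function is the idea your construction is missing; if you rebuild your $g_\alpha$ along those lines, your discreteness argument goes through unchanged.
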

\begin{proof} Let $n$ be the smallest that satisfies the hypothesis of the lemma and  put $\tau = iw(X)$. By the choice of $n$ there exists a $\tau$-sized  discrete subspace $D$ of $X^n$ with the following property:
\par\medskip
{\it Property:} $|\{x(i):i\leq n\}|=n$ for each $x\in D$.
\par\medskip\noindent
Let $\mathcal T$ be a Tychonoff subtopology of the topology of $X$ of weight $\tau$. Fix a $\tau$-sized network $\mathcal N$ for $\langle X, \mathcal T\rangle$ that consists of functionally closed subsets. Let $\mathcal P$ be the set of all pairs $\langle A, B\rangle$ of disjoint elements of $\mathcal N$. Enumerate $\mathcal P$ and $D$ as $\{\langle A_\alpha, B_\alpha\rangle:\alpha<\tau\}$ and $\{d_\alpha:\alpha<\tau\}$. Since $D$ is a discrete subspace, for each $\alpha<\tau$ we can fix a functionally closed set  $B^\alpha_1\times ...\times B^\alpha_n$ that contains $d_\alpha$ in its interior and misses $D\setminus \{d_\alpha\}$. By {\it Property}, we may assume that $B^\alpha_i\cap B^\alpha_j=\emptyset$ for distinct $i$ and $j$.

We will next construct our desired subspace $\{\langle f_\alpha, g_\alpha\rangle:\alpha<\tau\}$ of $C_p(X)$. 
\par\medskip\noindent
\begin{description}
	\item[\underline{\it Definition of $f_\alpha$, where $\alpha<\tau$}]  Let $S_\alpha$ be a functionally closed subset of $X$ such that $X\setminus S_\alpha$ can be written as a  union of $L_\alpha$ and $R_\alpha$ so that the following hold.
\begin{enumerate}
	\item $cl_X(L_\alpha)\cap cl_X(R_\alpha)\subset S_\alpha$;
	\item $A_\alpha\subset L_\alpha$ and $B_\alpha\subset R_\alpha$;
	\item $d_\alpha(i)\in L_\alpha$ if $d_\alpha(i)\not \in B_\alpha$, and $d_\alpha(i)\in R_\alpha$ if $d_\alpha(i)\in B_\alpha$.
\end{enumerate}
Such an $S_\alpha$ exists since $A_\alpha$ and $B_\alpha$ are functionally separable sets and the coordinate set of $d_\alpha$ is finite.  Let $f_{\alpha, l}: L_\alpha\cup S_\alpha\to [-1, 0]$ be any continuous function that has the following properties:
\begin{description}
	\item[\rm L1] $f^{-1}_{\alpha, l}(\{0\})= S_\alpha$;
	\item[\rm L2] $(\{d_\alpha(i):i\leq n\}\cap L_\alpha )\subset  f^{-1}_{\alpha, l}([-1,-1/3))\subset \bigcup 
\{B^\alpha_i :d_\alpha(i)\in L_\alpha\}$.
\end{description}
Such a function exists because the coordinate set of $d_\alpha$ is finite and $S_\alpha$ is functionally closed and misses the coordinate set of $d_\alpha$.
 Let $f_{\alpha , r}: R_\alpha\cup S_\alpha\to [0, 1]$ be any continuous function that has the following properties:
\begin{description}
	\item[\rm R1] $f^{-1}_{\alpha, r}(\{0\})= S_\alpha$;
	\item[\rm R2] $(\{d_\alpha(i):i\leq n\}\cap  R_\alpha )\subset  f^{-1}_{\alpha, r}((1/3, 1])\subset \bigcup 
\{B^\alpha_i :d_\alpha(i)\in R_\alpha\}$.
\end{description}
Put $f_\alpha = f_{\alpha ,l} \cup f_{\alpha , r}$. By L1 and R1, $f_\alpha$ is a continuous function from $X$ to $\mathbb R$. 
\end{description}

\par\medskip\noindent
\begin{description}
	\item[\underline{\it Definition of $g_\alpha$, where $\alpha<\tau$}] Let  $g_\alpha$ be any continuous function
that maps $B_i^\alpha$ to $(i-1/3, i+1/3)$. This can be done since $B_i^\alpha$'s form a disjoint finite collection of functionally closed sets.
\end{description}
It remains to show that  $F=\{\langle f_\alpha, g_\alpha\rangle:\alpha<\tau\}$ is  a point-separating   discrete subspace.
To show that $F$ is point-separating, fix distinct $a,b$ in $X$. Since $\mathcal N$ is a network, there exist disjoint $A,B\in \mathcal N$ that contain $a$ and $b$, respectively. Then $\langle A,B\rangle=\langle A_\alpha, B_\alpha\rangle\in \mathcal P$.
By the definition of $f_\alpha$, $f_\alpha(a)=f_{\alpha, l}(a)<0$ and $f_\alpha(b)=f_{\alpha,r}(b)>0$.

\par\medskip\noindent
To show that $F$ is discrete in itself, fix $\alpha$. Put
$$
U_\alpha=\{f: f(d_\alpha(i)) <-1/3\ if \ d_\alpha(i)\in L_\alpha, f(d_\alpha(i))>1/3\ if \ d_\alpha(i)\in R_\alpha\}
$$
$$
V_\alpha = \{g: g(d_\alpha(i))\in (i-1/3,i+1/3)\}
$$
Clearly, $U_\alpha\times V_\alpha$ is a neighborhood of $\langle f_\alpha, g_\alpha\rangle$. To show that this neighborhood misses the rest of $F$, fix $\beta\not = \alpha$. There exists $i\leq n$ such that $d_\alpha(i)\not \in B^\beta_i$. We have two possible cases.
\begin{description}
	\item[\rm Case 1] This case's assumption is that $d_\alpha(i)\not \in \bigcup_{j\leq n}B^\beta_j$. By L2 and R2 of  the definition of $f_\beta$, we have $f_\beta((d_\alpha^i))\in (-1/3,1/3)$. Hence $f_\beta\not\in U_\alpha$. Therefore, $\langle f_\beta,g_\beta\rangle\not \in U_\alpha\times V_\alpha$.

	\item[\rm Case 2] Assume Case 1 does not take place. Then there exists $j\leq n$ such that $d_\alpha(i)\in B^\beta_j$. By the choice of $i$, we have  $i\not = j$. Therefore, $g_\beta(d_\alpha(i))\not \in (i-1/3, i+1/3)$. Hence $g_\beta\not\in V_\alpha$. Therefore, $\langle f_\beta,g_\beta\rangle\not \in U_\alpha\times V_\alpha$.

\end{description}
\end{proof}

\par\medskip\noindent
Statements \ref{thm:functionspace}, \ref{thm:Xn}, and  \ref{thm:functionspace3}  result in the following criteria.
\par\bigskip\noindent
\begin{thm}\label{thm:criterion}
Let a space $X$ have $iw(X)$ of uncountable cofinality. Then $C^2_p(X)$ has a point-separating discrete subspace if and only if $s(X^n)\geq iw(X)$ for some $n$.
\end{thm}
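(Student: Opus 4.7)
The plan is to derive the equivalence directly from the preparatory results, with the cofinality assumption serving as the bridge between ``discrete subspace of size $\tau$'' (Lemma~\ref{lem:functionspace0}) and ``discrete subspace of size $iw(X)$'' (Theorem~\ref{thm:Xn}).

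For the necessity direction, given a point-separating discrete subspace $D\subset C_p^2(X)$, I would start from the remark preceding Theorem~\ref{thm:functionspace}, which yields $|D|\ge iw(X)$, and then pass to a subset $D'\subseteq D$ of cardinality exactly $iw(X)$; since any subset of a discrete subspace is discrete, $D'$ is discrete in $C_p^2(X)$. By hypothesis $cf(iw(X))>\omega$, so Lemma~\ref{lem:functionspace0}(1) applied to $D'$ delivers $s(X^n)\ge iw(X)$ for some $n\in\omega$. Equivalently, one can apply Theorem~\ref{thm:functionspace} directly when $cf(|D|)>\omega$ and use the subset-extraction trick only when $cf(|D|)=\omega$, in which case necessarily $|D|>iw(X)$.

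For sufficiency, suppose $s(X^n)\ge iw(X)$ for some $n$. Then $X^n$ contains a discrete subspace of cardinality exactly $iw(X)$ (take any subset of size $iw(X)$ from a witnessing discrete subspace), and Theorem~\ref{thm:Xn} produces the desired point-separating discrete subspace of $C_p^2(X)$.

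There is no deep obstacle here; the substance of the theorem already sits in Lemma~\ref{lem:functionspace0} and Theorem~\ref{thm:Xn}. The only delicate step is the size management in the necessity argument, and this is precisely where the assumption $cf(iw(X))>\omega$ is used: without it, a discrete subspace of $C_p^2(X)$ of size $iw(X)$ could have cofinality $\omega$, so only Lemma~\ref{lem:functionspace0}(2)---yielding $s(\sigma_X)\ge iw(X)$ rather than the sharper conclusion $s(X^n)\ge iw(X)$---would be at our disposal, which is of course the regime handled separately by Theorem~\ref{thm:functionspace2} (in ZFC) and Theorem~\ref{thm:functionspace3} (under GCH).
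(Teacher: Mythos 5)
Your proposal is correct and is essentially the paper's own argument: the paper gives no separate proof, merely noting that Theorems \ref{thm:functionspace}, \ref{thm:Xn} and \ref{thm:functionspace3} ``result in'' the criterion, and your subset-extraction step in the necessity direction (passing to a discrete $D'\subseteq D$ of size exactly $iw(X)$ so that Lemma \ref{lem:functionspace0}(1) applies) is precisely the intended use of the hypothesis $cf(iw(X))>\omega$. The one point you share with the paper without comment is that the sufficiency direction reads $s(X^n)\ge iw(X)$ as ``some $X^n$ has a discrete subspace of size $iw(X)$,'' i.e.\ it tacitly assumes the supremum defining $s(X^n)$ is attained at or above $iw(X)$, which is what Theorem \ref{thm:Xn} actually requires.
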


\par\bigskip\noindent
\begin{thm}\label{thm:criterionGCH}
Assume Generalized Continuum Hypothesis. Then $C^2_p(X)$ has a point-separating discrete subspace if and only if $s(X^n)\geq iw(X)$ for some $n$.
\end{thm}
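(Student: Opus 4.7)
The plan is to deduce the theorem directly from Theorems \ref{thm:functionspace}, \ref{thm:functionspace3}, and \ref{thm:Xn}, with essentially no new work required.

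For the forward direction, I would assume that $C^2_p(X)$ has a discrete point-separating subspace and put $\tau = iw(X)$. Since any point-separating subset of $C^2_p(X)$ has cardinality $\geq iw(X)$, I may restrict to a discrete point-separating subspace of cardinality exactly $\tau$. I would then split on $\mathrm{cf}(\tau)$: when $\mathrm{cf}(\tau) > \omega$, Theorem \ref{thm:functionspace} gives $s(X^n) \geq \tau$ for some $n$ at once; when $\mathrm{cf}(\tau) = \omega$, the conclusion is precisely the content of Theorem \ref{thm:functionspace3}. This is the single place where GCH enters — via Zenor's dichotomy followed by the GCH-collapse $w(X^2) \leq 2^{d(X^2)} < \tau$ in the density branch.

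For the reverse direction, given $s(X^n) \geq iw(X)$ for some $n$, I would extract a discrete subspace of $X^n$ of cardinality $iw(X)$ (shrinking a witnessing discrete subspace if it is strictly larger, under the convention that $s(X^n)\geq iw(X)$ yields a discrete subset of actual size $iw(X)$) and apply Theorem \ref{thm:Xn} verbatim to obtain a discrete point-separating family $\{\langle f_\alpha, g_\alpha\rangle : \alpha < \tau\}$ in $C^2_p(X)$.

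The substantive work was already carried out inside Theorem \ref{thm:functionspace3} (the GCH/Zenor argument) and Theorem \ref{thm:Xn} (the explicit construction of the separating pair together with the disjoint boxes $B^\alpha_i$). Here no real obstacle remains; the proof is just the combinatorial assembly of these three prior results, with the cofinality-dichotomy of Theorem \ref{thm:functionspace} handling the easy half and Theorem \ref{thm:functionspace3} absorbing the only model-theoretic content.
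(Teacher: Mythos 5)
Your proposal matches the paper exactly: the paper offers no separate proof of this theorem, deriving it as an immediate consequence of Theorems \ref{thm:functionspace}, \ref{thm:functionspace3}, and \ref{thm:Xn}, which is precisely your assembly. The only point you rightly flag --- that $s(X^n)\geq iw(X)$ is a supremum and need not be attained by an actual discrete subspace of that size --- is glossed over in the paper as well, so your treatment is faithful to it.
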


\par\bigskip\noindent
Note that criteria \ref{thm:criterion} and \ref{thm:criterionGCH} would hold for $C_p(X)$ if we could prove Theorem   \ref{thm:Xn} for $C_p(X)$.

\par\bigskip\noindent
\begin{que}\label{que:CpX}
Assume that $X^n$ has a discrete subspace of size $iw(X)$ for some natural number $n$. Is it true that $C_p(X)$ has a discrete point-separating set?
\end{que}

\par\bigskip\noindent
Using an argument somewhat similar to that of Theorem \ref{thm:Xn} we will next show that Question \ref{que:CpX} has an affirmative answer if we assume that $C$ is zero-dimensional.

\par\bigskip\noindent
\begin{thm}\label{thm:Xnzerodim}
Assume that $X$ is zero-dimensional.
If  $X^n$ has  a discrete subspace of size $iw(X)$, then  $C_p(X)$ has a point-separating discrete subspace.
\end{thm}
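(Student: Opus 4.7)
The strategy is to mimic the construction of Theorem \ref{thm:Xn}, but to use zero-dimensionality to replace the pair $\langle f_\alpha, g_\alpha\rangle$ with a single function $h_\alpha$ that is locally constant on a finite clopen partition of $X$. I would begin by setting up exactly as in the proof of Theorem \ref{thm:Xn}: let $n$ be minimal witnessing the hypothesis, fix a $\tau$-sized discrete $D\subset X^n$ with the Property that the coordinates of each $d\in D$ are distinct, where $\tau=iw(X)$. Because $X$ is zero-dimensional, I can arrange that the Tychonoff subtopology $\mathcal T$ of weight $\tau$ is itself zero-dimensional, so the $\tau$-sized network $\mathcal N$ can be chosen to consist of sets clopen in $\mathcal T$ (hence clopen in $X$). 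Enumerate the pairs of disjoint elements of $\mathcal N$ as $\{\langle A_\alpha,B_\alpha\rangle:\alpha<\tau\}$ and $D$ as $\{d_\alpha:\alpha<\tau\}$ as before.

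For each $\alpha$, zero-dimensionality lets me choose pairwise disjoint clopen sets $B^\alpha_1,\dots,B^\alpha_n$ with $d_\alpha(i)\in B^\alpha_i$, the box $B^\alpha_1\times\cdots\times B^\alpha_n$ missing $D\setminus\{d_\alpha\}$, and, crucially, each $B^\alpha_i$ contained in whichever of the three disjoint clopen zones $A_\alpha$, $B_\alpha$, or $X\setminus(A_\alpha\cup B_\alpha)$ contains $d_\alpha(i)$. I would then define $h_\alpha\in C_p(X)$ piecewise: set $h_\alpha\equiv i+10$ on $B^\alpha_i$; set $h_\alpha\equiv -1$ on $A_\alpha\setminus\bigcup_i B^\alpha_i$; set $h_\alpha\equiv 1$ on $B_\alpha\setminus\bigcup_i B^\alpha_i$; and $h_\alpha\equiv 0$ elsewhere. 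Since the partition is clopen, $h_\alpha$ is automatically continuous, and this single function combines the two roles that $f_\alpha$ and $g_\alpha$ played in Theorem \ref{thm:Xn}: the sign distinguishes $A_\alpha$ from $B_\alpha$, while the distinct labels $i+10$ tag the $B^\alpha_i$'s.

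Verification then splits into two standard checks. For point-separation, given distinct $a,b\in X$, pick disjoint $A_\alpha,B_\alpha\in\mathcal N$ with $a\in A_\alpha$, $b\in B_\alpha$; the possible values of $h_\alpha(a)$ lie in $\{-1\}\cup\{i+10:B^\alpha_i\subset A_\alpha\}$ while those of $h_\alpha(b)$ lie in the disjoint set $\{1\}\cup\{j+10:B^\alpha_j\subset B_\alpha\}$, so $h_\alpha(a)\neq h_\alpha(b)$. For discreteness, the neighborhood $U_\alpha=\{f:|f(d_\alpha(i))-(i+10)|<1/3\text{ for all }i\leq n\}$ should isolate $h_\alpha$: for $\beta\neq\alpha$, the box-isolation property yields some $i$ with $d_\alpha(i)\notin B^\beta_i$, whence $h_\beta(d_\alpha(i))\in\{-1,0,1\}\cup\{j+10:j\neq i\}$, which is disjoint from $(i+10-1/3,i+10+1/3)$.

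The main obstacle I anticipate is the simultaneous refinement step---fitting each $B^\alpha_i$ inside the correct clopen zone while preserving the box-isolation of $d_\alpha$ in $D$. This is precisely where zero-dimensionality is essential, and it is what permits collapsing the two coordinates of $C^2_p(X)$ into the single coordinate of $C_p(X)$, dispensing with $g_\alpha$ altogether.
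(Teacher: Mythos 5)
Your proposal is correct and follows essentially the same route as the paper's own proof: the paper likewise passes to a zero\-dimensional subtopology of weight $iw(X)$ (via Mardesic's factorization theorem), takes a clopen network, and for each pair $\langle A_\alpha,B_\alpha\rangle$ builds a single locally constant function on a finite clopen partition that tags the coordinate boxes of $d_\alpha$ with distinct values and separates $A_\alpha$ from $B_\alpha$ by the values on the remainders. The only differences are cosmetic choices of constants (the paper uses values $0,1,\dots,n+1$ where you use $-1,0,1,i+10$).
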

\begin{proof} Let $n$ be the smallest that satisfies the hypothesis of the lemma and  put $\tau = iw(X)$. By the choice of $n$ there exists a $\tau$-sized  discrete subspace $D$ of $X^n$ with the following property:
 \par\medskip
{\it Property:} $|\{x(i):i\leq n\}|=n\ for\ each\ x\in D.$

\par\medskip\noindent
Let $\mathcal T$ be a Tychonoff subtopology of the topology of $X$ of weight $\tau$. Due to zero-dimensionality of $X$ and the factorization theorem of Mardesic \cite{Mar}, we may assume that $\mathcal T$ is zero-dimensional too.  Fix a $\tau$-sized network $\mathcal N$ for $\langle X, \mathcal T\rangle$ that consists of clopen  subsets. Let $\mathcal P$ be the set of all pairs $\langle A, B\rangle$ of disjoint elements of $\mathcal N$. Enumerate $\mathcal P$ and $D$ as $\{\langle A_\alpha, B_\alpha\rangle:\alpha<\tau\}$ and $\{d_\alpha:\alpha<\tau\}$.
We will next construct our desired subspace in $C_p(X)$.
\par\medskip\noindent
\underline{\it Definition of $f_\alpha$, where $\alpha<\tau$}]  Since $D$ is a discrete subspace, we can fix a clopen 
box $U^\alpha_1\times ...\times U^\alpha_n$ that contains $d_\alpha$ and misses $D\setminus \{d_\alpha\}$. 
By {\it Property}, we may assume that $U^\alpha_i\cap U_j^\alpha=\emptyset$ if $i\not = j$.
 Since $A_\alpha$ and $B_\alpha$ are disjoint, we may assume that each $U_i$ meets at most one of the sets $A_\alpha$ and $B_\alpha$. Define $f_\alpha: X\to \{0,1,2,...,n+1\}$ by letting $f_\alpha(U_i)=\{i\}$, $f_\alpha(A_\alpha\setminus \bigcup_{i\leq n}U_i)=\{0\}$, and $f_\alpha (X\setminus (A_\alpha\cup U_1\cup ...\cup U_n) )= \{n+1\}$.

\par\medskip\noindent
It remains to show that  $F=\{f_\alpha:\alpha<\tau\}$ is a  point-separating  discrete subspace.
To show that $F$ is point-separating, fix distinct $a,b$ in $X$. Since $\mathcal N$ is a network, there exist disjoint $A,B\in \mathcal N$ that contain $a$ and $b$, respectively. Then $\langle A,B\rangle=\langle A_\alpha, B_\alpha\rangle\in \mathcal P$.
Since no $U_i^\alpha$ meets both $A_\alpha$ and $B_\alpha$ at the same time,  $f_\alpha(A_\alpha)$ misses  $f_\alpha(B_\alpha)$.

\par\medskip\noindent
To show that $F$ is discrete in itself, fix $f_\alpha$ and put $V_\alpha = \{f: f(d_\alpha(i)) \in (i-1/3, i+1/3), i\leq n\}$. Next fix any $\beta\not = \alpha$. Then there exists $i\leq n$ such that $d_\alpha(i)\not \in U^\beta_i$. Therefore, $f_\beta(d_\alpha(i))\not \in (i-1/3,i+1/3)$. Hence, $f_\beta\not \in U_\alpha$.
\end{proof}

\par\medskip\noindent
Note that  Theorems \ref{thm:criterion} and  \ref{thm:criterionGCH} are now true for $C_p(X)$ provided $X$ is zero-dimensional. Let us state the new versions for reference.

\par\bigskip\noindent
\begin{thm}\label{thm:criterion0dim}
Let a zero-dimensional space $X$ have $iw(X)$ of uncountable cofinality. Then $C_p(X)$ has a point-separating discrete subspace if and only if $s(X^n)\geq iw(X)$ for some natural number $n$.
\end{thm}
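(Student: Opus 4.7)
The plan is to derive the biconditional directly from results already established in the paper. The ``if'' direction is immediate: the hypothesis that $s(X^n)\geq iw(X)$ for some $n$, combined with the assumption that $X$ is zero-dimensional, is exactly the hypothesis of Theorem \ref{thm:Xnzerodim}, which produces a point-separating discrete subspace of $C_p(X)$.

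For the ``only if'' direction, set $\tau=iw(X)$ and let $F\subset C_p(X)$ be a point-separating discrete subspace. First I would observe that $|F|\geq \tau$: the evaluation map $e\colon X\to \mathbb{R}^F$, $e(x)(f)=f(x)$, is a continuous injection, so pulling back the product topology on $\mathbb{R}^F$ gives a Tychonoff subtopology of $X$ of weight at most $|F|$. I would then choose a subset $F'\subseteq F$ of cardinality exactly $\tau$; since discreteness is hereditary, $F'$ is a $\tau$-sized discrete subspace of $C_p(X)$, but it need not separate points.

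Because the point-separating property may have been lost in passing to $F'$, I would not apply Theorem \ref{thm:functionspace} directly. Instead, I would embed $C_p(X)$ into $C_p^2(X)$ by $f\mapsto \langle f,\mathbf{0}\rangle$, where $\mathbf{0}$ denotes the constant zero function; this embedding carries discrete sets to discrete sets, so $C_p^2(X)$ contains a discrete subspace of size $\tau$. Since $cf(\tau)>\omega$ by hypothesis, part (1) of Lemma \ref{lem:functionspace0} now applies and yields $s(X^n)\geq \tau = iw(X)$ for some $n\in\omega$, which is what we wanted.

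The entire argument is a short assembly of Theorem \ref{thm:Xnzerodim} and Lemma \ref{lem:functionspace0}, and I do not expect any serious obstacle. The only step that calls for care is the passage from $F$ to a discrete subset $F'$ of cardinality exactly $\tau$, together with the routing through $C_p^2(X)$; this detour is needed precisely because Lemma \ref{lem:functionspace0} does not require its input to be point-separating, whereas Theorem \ref{thm:functionspace} does and hence cannot be invoked directly once the separating property is dropped.
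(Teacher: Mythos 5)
Your proof is correct and takes essentially the same route as the paper, which obtains this theorem by combining Theorem \ref{thm:Xnzerodim} (sufficiency) with Theorem \ref{thm:functionspace}, itself resting on Lemma \ref{lem:functionspace0}. Your extra care in passing to a $\tau$-sized discrete subset $F'$ and routing it through $C_p^2(X)$ merely makes explicit a step the paper leaves implicit, since $|F|$ itself need not have uncountable cofinality even when $iw(X)$ does.
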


\par\bigskip\noindent
\begin{thm}\label{thm:criterionGCH0dim}
Assume Generalized Continuum Hypothesis. Let $X$ be zero-dimensional. Then $C_p(X)$ has a point-separating discrete subspace if and only if $s(X^n)\geq iw(X)$ for some $n$.
\end{thm}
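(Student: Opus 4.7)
The plan is to observe that this theorem falls out by combining two already-established results: Theorem \ref{thm:functionspace3} gives the forward implication (after a trivial lift from $C_p(X)$ to $C_p^2(X)$), and Theorem \ref{thm:Xnzerodim} gives the backward implication directly. This mirrors the way Theorem \ref{thm:criterionGCH} was obtained from Theorems \ref{thm:functionspace3} and \ref{thm:Xn}; the only change is to substitute the zero-dimensional construction \ref{thm:Xnzerodim} (which lands in $C_p(X)$) for \ref{thm:Xn} (which lands in $C_p^2(X)$), at the cost of the standing zero-dimensionality hypothesis on $X$.

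For the forward direction, I would start by taking a point-separating discrete subspace $F\subset C_p(X)$, fixing an arbitrary $h\in C_p(X)$, and considering $F'=\{\langle f,h\rangle: f\in F\}\subset C_p^2(X)$. Clearly $F'$ separates points of $X$ since $F$ does; and if $U_f$ is an open neighborhood of $f$ in $C_p(X)$ isolating $f$ from $F\setminus\{f\}$, then $U_f\times C_p(X)$ isolates $\langle f,h\rangle$ from $F'\setminus\{\langle f,h\rangle\}$, so $F'$ is discrete in $C_p^2(X)$. Theorem \ref{thm:functionspace3}, invoked under GCH, then delivers $s(X^n)\geq iw(X)$ for some $n\in\omega$.

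For the backward direction, assuming $s(X^n)\geq iw(X)$ for some $n$, I would pass to a discrete subspace of $X^n$ of cardinality exactly $iw(X)$ (the same extraction tacitly used in the derivations of Theorems \ref{thm:criterion} and \ref{thm:criterionGCH}) and then apply Theorem \ref{thm:Xnzerodim}, whose zero-dimensionality hypothesis is supplied by the standing assumption on $X$, to produce a point-separating discrete subspace of $C_p(X)$. The only potential obstacle here is essentially bookkeeping: one must verify that the product-with-$\{h\}$ trick really does reduce the $C_p(X)$-statement to the $C_p^2(X)$-statement, which requires nothing beyond $X$ being Tychonoff. No genuinely new ideas beyond those already developed are needed.
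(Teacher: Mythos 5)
Your proposal is correct and matches the paper's (largely implicit) derivation: the paper likewise obtains this theorem by combining the GCH necessity result for $C^2_p(X)$ (Theorem \ref{thm:functionspace3}, with the same tacit lift of a discrete point-separating set from $C_p(X)$ into $C^2_p(X)$) with the zero-dimensional construction of Theorem \ref{thm:Xnzerodim}. Your explicit verification of the $F'=\{\langle f,h\rangle: f\in F\}$ trick only spells out what the paper leaves unstated.
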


\par\bigskip\noindent
For our final characterization discussion we would like to extract a technical statement from the proof of Theorem \ref{thm:Xn} and prove one helpful proposition.
\par\bigskip\noindent
\begin{lem}\label{lem:extract}
Assume that a finite power of $X$ has a discrete subspace of size $\lambda$. Let $\{\langle A_\alpha, B_\alpha\rangle:\alpha<\lambda\}$ be a family of pairs of functionally closed disjoint subsets of $X$. Then there exists a discrete subspace $F$ in $C^2_p(X)$ with the following property:
\par\smallskip\noindent
(*) If $a\in A_\alpha$ and $b\in B_\alpha$ for some $\alpha<\lambda$, then $f(a)\not = f(b)$ for some $f\in F$.
\end{lem}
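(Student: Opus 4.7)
The plan is to adapt verbatim the construction from the proof of Theorem \ref{thm:Xn}. The key observation is that in that proof the network $\mathcal N$ served only to manufacture pairs $\langle A_\alpha,B_\alpha\rangle$ witnessing the point-separation property; the discreteness of the resulting subspace of $C_p^2(X)$ depended only on the discrete family $\{d_\alpha\}$ in $X^n$ and the boxes $B_i^\alpha$ chosen around them. Since Lemma \ref{lem:extract} hands us such pairs directly, we can drop the network entirely and keep only the discretization machinery.

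Concretely, I would first choose the least $n$ for which $X^n$ has a discrete subspace of size $\lambda$, fix one such subspace $D=\{d_\alpha:\alpha<\lambda\}$, and by the same pigeonhole argument used in Theorem \ref{thm:Xn} pass to a $\lambda$-sized subfamily in which the coordinates of each $d_\alpha$ are pairwise distinct (otherwise a projection would yield a $\lambda$-sized discrete subspace of $X^{n-1}$, contradicting the minimality of $n$). For each $\alpha<\lambda$, I would then pick pairwise disjoint functionally closed boxes $B_1^\alpha\times\cdots\times B_n^\alpha$ containing $d_\alpha$ in their interior and missing $D\setminus\{d_\alpha\}$.

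Next, for each $\alpha<\lambda$, I would build $\langle f_\alpha,g_\alpha\rangle$ using the given pair $\langle A_\alpha,B_\alpha\rangle$ exactly as in Theorem \ref{thm:Xn}: select a functionally closed separator $S_\alpha$ with sides $L_\alpha\supset A_\alpha$ and $R_\alpha\supset B_\alpha$, routing each coordinate of $d_\alpha$ to the appropriate side; define $f_{\alpha,l}$ and $f_{\alpha,r}$ satisfying the analogues of L1, L2, R1, R2; glue them into $f_\alpha$; and let $g_\alpha$ send $B_i^\alpha$ into $(i-1/3,i+1/3)$.

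Property (*) is then immediate: for $a\in A_\alpha\subset L_\alpha$ and $b\in B_\alpha\subset R_\alpha$, neither point lies in $S_\alpha$, so $f_\alpha(a)=f_{\alpha,l}(a)<0<f_{\alpha,r}(b)=f_\alpha(b)$. Discreteness of $F=\{\langle f_\alpha,g_\alpha\rangle:\alpha<\lambda\}$ is witnessed by the same neighborhoods $U_\alpha\times V_\alpha$ used in Theorem \ref{thm:Xn}, with the identical two-case analysis on whether $d_\alpha(i)\notin\bigcup_j B_j^\beta$ (then $f_\beta$ sends $d_\alpha(i)$ into $(-1/3,1/3)$, ejecting $f_\beta$ from $U_\alpha$) or $d_\alpha(i)\in B_j^\beta$ for some $j\neq i$ (then $g_\beta(d_\alpha(i))\notin(i-1/3,i+1/3)$, ejecting $g_\beta$ from $V_\alpha$). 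There is no substantive obstacle here; the content of the lemma is precisely the discretization half of the Theorem \ref{thm:Xn} argument, isolated from the network-based point-separation half, and the only thing worth spelling out carefully is that the given $A_\alpha,B_\alpha$ indeed sit inside the open sides $L_\alpha,R_\alpha$ and therefore avoid the zero set $S_\alpha$ of $f_\alpha$.
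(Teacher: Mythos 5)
Your proposal is correct and is essentially the paper's own argument: the paper gives no separate proof of Lemma \ref{lem:extract}, explicitly describing it as the technical content ``extracted from the proof of Theorem \ref{thm:Xn}'', and what you write out---minimal $n$, the pigeonhole reduction to pairwise-distinct coordinates, the separator $S_\alpha$ with sides $L_\alpha\supset A_\alpha$ and $R_\alpha\supset B_\alpha$, and the identical two-case discreteness check---is exactly that extraction with the network-derived pairs replaced by the given family $\{\langle A_\alpha,B_\alpha\rangle:\alpha<\lambda\}$.
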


\par\bigskip\noindent
\begin{pro}\label{pro:union}
Let $C^m_p(X)$ contain a point-separating subspace which is a countable union of discrete subspaces. Then $C^m_p(X)$ has a discrete point-separating subspace.
\end{pro}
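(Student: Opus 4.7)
The plan is to transform the given countable union $F=\bigcup_{n}D_n$ into a single discrete point-separating set by applying coordinate-wise homeomorphisms that drive the values of different $D_n$'s into pairwise disjoint intervals. This approach is direct and avoids the cardinality machinery of Theorems~\ref{thm:Xn} and~\ref{thm:functionspace2}, which would meet obstructions when $iw(X)$ has countable cofinality.

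First I would fix a homeomorphism $\varphi\colon\mathbb{R}\to(-1,1)$, say $\varphi(t)=t/(1+|t|)$, and an arbitrary point $x_0\in X$. Post-composition $f\mapsto\varphi\circ f$ is a homeomorphism of $C_p(X)$ onto $\{g\in C_p(X):g(X)\subset(-1,1)\}$, and adding a constant function is a self-homeomorphism of $C_p(X)$. For each $n\in\omega$, I then define
\[
\Psi_n\colon C_p^m(X)\to C_p^m(X),\qquad (f_1,\dots,f_m)\mapsto(\varphi\circ f_1+2n,\varphi\circ f_2,\dots,\varphi\circ f_m),
\]
which is a homeomorphism of $C_p^m(X)$ onto its image. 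I set $D'_n=\Psi_n(D_n)$ and take $D'=\bigcup_n D'_n$ as the proposed discrete point-separating subspace of $C_p^m(X)$.

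Each $D'_n$ is discrete as the homeomorphic image of the discrete set $D_n$. By construction, any $(g_1,\dots,g_m)\in D'_n$ satisfies $g_1(X)\subset(2n-1,2n+1)$, and these open intervals are pairwise disjoint across $n$. So the sub-basic open set $V_n=\{(g_1,\dots,g_m)\in C_p^m(X):g_1(x_0)\in(2n-1,2n+1)\}$ contains $D'_n$ and misses every $D'_{n'}$ with $n'\ne n$. For each $h\in D'_n$, discreteness of $D'_n$ supplies a neighborhood $U_h$ in $C_p^m(X)$ with $U_h\cap D'_n=\{h\}$; then $U_h\cap V_n$ is a neighborhood of $h$ whose intersection with $D'$ is $\{h\}$. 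Hence $D'$ is discrete.

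Point-separation follows because $\varphi$ is injective and adding a constant does not alter whether a function distinguishes two points, so $\Psi_n$ preserves separating behavior coordinate by coordinate. In particular, if some $(f_1,\dots,f_m)\in D_n$ distinguishes $x$ from $y$ through its $i$-th coordinate, then $\Psi_n(f_1,\dots,f_m)\in D'_n$ also distinguishes $x$ from $y$ through the same coordinate. Since $F$ is point-separating, so is $D'$. The only observation that requires any thought is that a single scalar shift applied to one coordinate, together with boundedness of the others, is enough to isolate the countably many discrete layers using one point-evaluation at $x_0$; beyond that the argument is bookkeeping.
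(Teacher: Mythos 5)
Your proof is correct and follows essentially the same route as the paper: both arguments post-compose with homeomorphisms of $\mathbb R$ onto pairwise disjoint open intervals so that each layer $D_n$ lands in its own ``band,'' which makes the layers mutually separated while preserving discreteness of each layer and the point-separating property. The only cosmetic differences are that the paper applies $h_n\colon\mathbb R\to(n,n+1)$ to every coordinate and separates layers by noting the ranges are disjoint at every point, whereas you shift only the first coordinate and use a single evaluation at $x_0$; both are sound.
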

\begin{proof}
We will prove the statement for $m=2$.
Let $D=\cup_nD_n$ be a point-separating set of $C^2_p(X)$, where each $D_n$ is a discrete subspace. For each $n$, fix a homeomorphism $h_n:\mathbb R\to (n,n+1)$. Put $E_n = \{\langle h_n\circ f, h_n\circ g\rangle: \langle f,g\rangle \in D_n\}$. Clearly, $E_n$ separates $x$ and $y$ if and only if $D_n$ does. Also, $E_n$ is a discrete subspace of $C^2_p(X)$. Since all functions in $(\cup_kE_k)\setminus E_n$ target $\mathbb R\setminus (n,n+1)$, we conclude that the closure of $(\cup_kE_k)\setminus E_n$ misses $E_n$. Terefore, $\cup_n E_n$ is a point-separating discrete subspace of $C^2_p(X)$.
\end{proof}

\par\bigskip\noindent
\begin{thm}\label{thm:criterionsigma} 
$C^2_p(X)$ has a discrete point-separating subspace if and only if $s(\sigma_X) \geq iw(X)$.
\end{thm}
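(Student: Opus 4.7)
The forward direction is immediate from Theorem \ref{thm:functionspace2}. For the converse, set $\tau = iw(X)$ and assume $s(\sigma_X)\geq \tau$. My plan is to case-split on $cf(\tau)$ and combine the two tools already developed in the paper: Theorem \ref{thm:Xn} in the uncountable-cofinality regime, and Lemma \ref{lem:extract} together with Proposition \ref{pro:union} in the countable-cofinality regime.

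If $cf(\tau)>\omega$, then because $\sigma_X$ is a countable union of subspaces each homeomorphic to a finite power $X^n$, any $\tau$-sized discrete subset of $\sigma_X$ must contribute a $\tau$-sized discrete subset inside some single piece. Hence $s(X^n)\geq \tau = iw(X)$ for some $n$, and Theorem \ref{thm:Xn} immediately produces the desired discrete point-separating subspace of $C^2_p(X)$.

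Suppose now $cf(\tau)=\omega$. Pick a strictly increasing sequence $\langle \tau_n\rangle$ of cardinals of uncountable cofinality with $\tau=\sup_n \tau_n$. Applying the previous paragraph's reasoning at each $\tau_n$ yields $k_n\in\omega$ with $s(X^{k_n})\geq \tau_n$. Next, fix a Tychonoff subtopology $\mathcal T$ of $X$ of weight $\tau$ and a network $\mathcal N$ for it consisting of $\tau$-many functionally closed subsets. Let $\mathcal P$ be the collection of disjoint pairs from $\mathcal N$; clearly $|\mathcal P|=\tau$. Partition $\mathcal P=\bigsqcup_n \mathcal P_n$ with $|\mathcal P_n|=\tau_n$. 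For each $n$, Lemma \ref{lem:extract} (with $\lambda=\tau_n$, using the discrete subset of $X^{k_n}$ and the pairs in $\mathcal P_n$) yields a discrete $F_n\subset C^2_p(X)$ that separates every pair from $\mathcal P_n$. Set $F=\bigcup_n F_n$. Since $\mathcal N$ is a network for a Tychonoff topology, any two distinct points of $X$ lie in some pair of $\mathcal P$, so $F$ is point-separating; being a countable union of discrete subspaces, Proposition \ref{pro:union} converts $F$ into a discrete point-separating subspace of $C^2_p(X)$.

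The main obstacle is precisely the $cf(\tau)=\omega$ case, because the $\tau$-sized discrete subspace inside $\sigma_X$ need not concentrate in any single finite power. The extraction of Lemma \ref{lem:extract} from the proof of Theorem \ref{thm:Xn} and the gluing trick of Proposition \ref{pro:union} were introduced precisely to circumvent this obstruction, so the remaining task is essentially the bookkeeping sketched above.
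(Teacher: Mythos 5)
Your proof is correct and follows essentially the same route as the paper: the heart of both arguments is the combination of Lemma \ref{lem:extract} applied to a partition $\mathcal P=\bigcup_n\mathcal P_n$ matched against discrete sets living in finite powers of $X$, followed by Proposition \ref{pro:union}. The only difference is your case split on $cf(\tau)$, which the paper avoids by choosing the sets $D_n$ with $\tau=\sum_n|D_n|$ uniformly (this covers the uncountable-cofinality case as well, where your separate appeal to Theorem \ref{thm:Xn} is a harmless shortcut).
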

\begin{proof}
Necessity is done in Theorem \ref{thm:functionspace2}. To prove sufficiency, put $\tau = iw(X)$. Let $\mathcal N$ be a
$\tau$-sized family of functionally closed subsets of $X$ that is a network for some Tychonoff subtopology of $X$. Let $\mathcal P$ consist of all pairs of disjoint elements of $\mathcal N$. For each $n$ we can find a discrete subset $D_n$ of $\sigma_X$ that lives in a copy of some finite power of $X$ so that $\tau = \sum_n |D_n|$. Next represent $\mathcal P$ as $\bigcup \mathcal P_n$, where $|\mathcal P_n|=|D_n|$. Applying Lemma \ref{lem:extract} to $\mathcal P_n$ and $D_n$ for each $n$, we find a point-separating subspace in $C^2_p(X)$ that is a countable union of discrete subspaces. By Proposition \ref{pro:union}, $C^2_p(X)$ contains a discrete point-separating subspace.
\end{proof}

\par\bigskip\noindent
An argument identical to that of Theorem \ref{thm:criterionsigma} leads to the following statement for $C_p(X)$.
\par\bigskip\noindent
\begin{thm}\label{thm:criterionsigma0dim} Assume that $X$ is zero-dimensional. Then 
$C_p(X)$ has a discrete point-separating subspace if and only if $s(\sigma_X) \geq iw(X)$.
\end{thm}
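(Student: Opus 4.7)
The plan is to follow the proof of Theorem \ref{thm:criterionsigma} verbatim, replacing its two main ingredients with their zero-dimensional $C_p$-counterparts. Necessity is already packaged: $s(\sigma_X)\ge iw(X)$ whenever $C_p(X)$ (and not just $C_p^2(X)$) has a discrete point-separating subspace, by Theorem \ref{thm:functionspace2}. So the only substantive task is the sufficiency direction for $C_p(X)$ under the standing assumption that $X$ is zero-dimensional.

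First I would extract from the proof of Theorem \ref{thm:Xnzerodim} the following zero-dimensional $C_p$-analog of Lemma \ref{lem:extract}: if $X$ is zero-dimensional, if some finite power $X^n$ contains a discrete subspace of size $\lambda$, and if $\{\langle A_\alpha,B_\alpha\rangle:\alpha<\lambda\}$ is any family of pairs of disjoint clopen subsets of $X$, then there is a discrete subspace $F\subset C_p(X)$ enjoying the separation property (*) of Lemma \ref{lem:extract}. The proof of Theorem \ref{thm:Xnzerodim} already produces such an $F$ by building the finitely-valued functions $f_\alpha:X\to\{0,1,\dots,n+1\}$; that argument nowhere uses that the pairs originate from a fixed network, only that they are pairwise disjoint clopen sets, so the construction goes through verbatim once we enumerate the given family of pairs in place of $\mathcal P$ and match it with the enumeration of $D\subset X^n$ (with $n$ chosen minimal so that Property holds).

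Next, I would repeat the assembly argument of Theorem \ref{thm:criterionsigma}. Put $\tau=iw(X)$; by Mardesic's factorization theorem applied to a Tychonoff subtopology of weight $\tau$, we may fix a $\tau$-sized clopen network $\mathcal N$ for some zero-dimensional Tychonoff subtopology of $X$. Let $\mathcal P$ be the set of pairs of disjoint elements of $\mathcal N$. Using $s(\sigma_X)\ge\tau$, pick discrete subsets $D_n\subset\sigma_X$, each contained in a copy of some finite power of $X$, with $\tau=\sum_n|D_n|$. Partition $\mathcal P$ as $\bigcup_n\mathcal P_n$ with $|\mathcal P_n|=|D_n|$, apply the extracted zero-dimensional lemma to each pair $(\mathcal P_n,D_n)$ to obtain a discrete $F_n\subset C_p(X)$, and observe that $\bigcup_n F_n$ point-separates $X$ because $\mathcal N$ is a network. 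Finally, Proposition \ref{pro:union}, whose proof is written for $m=2$ but whose composition trick with homeomorphisms $h_n:\mathbb R\to(n,n+1)$ is insensitive to the exponent, upgrades this countable union of discrete sets to a single discrete point-separating subspace of $C_p(X)$.

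I do not anticipate a real obstacle. The only care needed is the routine bookkeeping that the construction of Theorem \ref{thm:Xnzerodim} depends only on the clopen-disjoint structure of the input pairs, so that it repackages cleanly as a lemma in the style of Lemma \ref{lem:extract}, together with the trivial check that Proposition \ref{pro:union} applies with $m=1$.
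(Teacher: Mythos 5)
Your proposal is correct and matches the paper's approach: the paper proves this theorem simply by asserting that ``an argument identical to that of Theorem \ref{thm:criterionsigma} leads to the following statement,'' and your plan --- extracting a zero-dimensional $C_p$-analog of Lemma \ref{lem:extract} from the proof of Theorem \ref{thm:Xnzerodim}, reusing the assembly via $s(\sigma_X)\geq iw(X)$, and invoking Proposition \ref{pro:union} with $m=1$ --- is exactly the intended instantiation of that identical argument, with the necessity direction handled by Theorem \ref{thm:functionspace2} as in the paper.
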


\par\bigskip\noindent
Theorems \ref{thm:criterionsigma} and \ref{thm:criterionsigma0dim} imply the following.
\par\bigskip\noindent
\begin{cor}\label{CpXCpX2}
Let $X$ be a zero-dimensional space. Then $C_p(X)$ has a point-separating discrete subspaces if and only if $C^2_p(X)$ does.
\end{cor}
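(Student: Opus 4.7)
The plan is to obtain this corollary by directly combining the two preceding characterizations, Theorem \ref{thm:criterionsigma} and Theorem \ref{thm:criterionsigma0dim}. Each characterizes the existence of a discrete point-separating subspace (of $C^2_p(X)$ and of $C_p(X)$ respectively, the second assuming zero-dimensionality) by the single intrinsic inequality $s(\sigma_X) \geq iw(X)$. Since, under the hypothesis of the corollary, both conditions translate to the same property of $X$, they are equivalent to each other, and the corollary follows.

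For a clean writeup I would split into the two directions. The implication from $C_p(X)$ to $C^2_p(X)$ is in fact trivial and requires no zero-dimensionality: given a discrete point-separating $F \subset C_p(X)$, fix any $c \in C_p(X)$ and set $G = \{\langle f, c\rangle : f \in F\}$. Then $G$ is homeomorphic to $F$ as a subspace of $C^2_p(X) = C_p(X)\times C_p(X)$, hence discrete, and it separates any two distinct points of $X$ through its first coordinate.

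For the reverse implication I would invoke the heavy machinery already developed. If $C^2_p(X)$ admits a discrete point-separating subspace, then Theorem \ref{thm:criterionsigma} yields $s(\sigma_X)\geq iw(X)$. Since $X$ is zero-dimensional, Theorem \ref{thm:criterionsigma0dim} converts this cardinal inequality back into a discrete point-separating subspace of $C_p(X)$, which is what we want.

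There is essentially no new obstacle here: the genuine work has already been absorbed into Theorem \ref{thm:Xnzerodim} and the Lemma \ref{lem:extract}/Proposition \ref{pro:union} argument feeding into Theorem \ref{thm:criterionsigma0dim}, where zero-dimensionality was used precisely to replace the two-coordinate construction of Theorem \ref{thm:Xn} by a single integer-valued clopen function and thus to stay inside $C_p(X)$. Given those inputs, this corollary is essentially a bookkeeping step.
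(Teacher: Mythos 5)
Your proposal is correct and matches the paper's own argument: the corollary is obtained by combining Theorem \ref{thm:criterionsigma} with Theorem \ref{thm:criterionsigma0dim}, both of which reduce the existence of a discrete point-separating subspace to the single condition $s(\sigma_X)\geq iw(X)$. Your extra remark that the forward direction is trivial via a constant second coordinate is a harmless bonus but not needed.
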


\par\bigskip\noindent
Note that the image of a point-separating family under a homeomorphism need not be point-separating. Indeed,  $\{id_{[0,1]}\}$ is a point-separating subspace of $C_p([0,1])$, However, one can construct an automorphism on $C_p([0,1])$ that caries $\{ id_{[0,1]}\}$ to $\{\vec 0\}$ which is not point-separating. In connection with this observation,
it would be interesting to know if having a discrete point-separating subspace is preserved by homeomorphisms among function spaces. The answer is affirmative and to show it we will use the fact \cite[I,1,6]{Arh} that if $C_p(X)$ and $C_p(Y)$ are homeomorphic then $iw(X)=iw(Y)$.

\par\bigskip\noindent
\begin{thm}\label{thm:tequiv0dim}
 Let $X$ and $Y$ be  $t$-equivalent.
If $C^2_p(X)$ has a discrete point-separating subspace, then so does $C^2_p(Y)$.
\end{thm}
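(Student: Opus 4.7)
The plan is to reduce the statement to Theorem~\ref{thm:criterionsigma} and then show that both sides of that criterion are intrinsic invariants of the $C_p$-homeomorphism class. By Theorem~\ref{thm:criterionsigma}, the hypothesis reads $s(\sigma_X)\geq iw(X)$ and what is to be shown is $s(\sigma_Y)\geq iw(Y)$. The cited Arhangelskii result gives $iw(X)=iw(Y)=:\tau$, so it suffices to transfer the inequality $s(\sigma_X)\geq \tau$ to $s(\sigma_Y)\geq \tau$, using only that $C_p(X)\cong C_p(Y)$ and hence $C^2_p(X)\cong C^2_p(Y)$.

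To make this transfer, I would first establish the following intrinsic equivalence valid for any space $Z$ and any infinite cardinal $\kappa$:
\[
s(\sigma_Z)\geq \kappa \iff s(C^2_p(Z))\geq \kappa.
\]
The $(\Leftarrow)$ direction is Lemma~\ref{lem:functionspace0} (parts (1) and (2) together, observing that every finite power of $Z$ embeds canonically into $\sigma_Z$). For $(\Rightarrow)$, a $\lambda$-sized discrete subspace of a finite power of $Z$ can be converted by Lemma~\ref{lem:extract} (with any auxiliary choice of disjoint pairs $\langle A_\alpha,B_\alpha\rangle$, since the separation property (*) is free and unused here) into a $\lambda$-sized discrete subspace of $C^2_p(Z)$. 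When $cf(\kappa)>\omega$, a pigeonhole argument on the natural countable cover of $\sigma_Z$ by the subspaces fixing the base point outside a finite support set supplies such a discrete subspace inside a single finite power of $Z$; when $cf(\kappa)=\omega$, one iterates the previous case along a strictly increasing sequence $\kappa_n\to\kappa$ of uncountable cofinality and concludes $s(C^2_p(Z))\geq \sup_n\kappa_n=\kappa$.

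Granted the equivalence, the proof concludes by chaining: $s(\sigma_X)\geq \tau$ gives $s(C^2_p(X))\geq \tau$, the homeomorphism $C^2_p(X)\cong C^2_p(Y)$ transfers this to $s(C^2_p(Y))\geq \tau$, the $(\Leftarrow)$ direction of the equivalence returns $s(\sigma_Y)\geq \tau=iw(Y)$, and a final invocation of Theorem~\ref{thm:criterionsigma} produces a discrete point-separating subspace of $C^2_p(Y)$. I expect the main obstacle to be the $(\Rightarrow)$ direction of the intermediate equivalence when $cf(\kappa)=\omega$: there $s(\sigma_Z)$ need not be attained, and one must be content to produce discrete subspaces of $C^2_p(Z)$ whose sizes approach but may never reach $\kappa$, concluding with a supremum rather than a single witness.
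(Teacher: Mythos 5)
Your proposal is correct in substance and ends the same way the paper does: both arguments produce a large discrete subspace of $C^2_p(Y)$, push it down to $\sigma_Y$ via Lemma~\ref{lem:functionspace0}, use the cited Arhangelskii fact $iw(X)=iw(Y)$, and finish with the sufficiency half of Theorem~\ref{thm:criterionsigma}. The difference is on the $X$ side. The paper observes that an infinite point-separating set $D$ automatically satisfies $|D|\geq iw(X)$, so its image $\phi(D)$ under the homeomorphism is already a discrete subspace of $C^2_p(Y)$ of size $\geq iw(Y)$; no return trip from $\sigma_X$ up to $C^2_p(X)$ is needed. Your detour through the equivalence $s(\sigma_Z)\geq\kappa \iff s(C^2_p(Z))\geq\kappa$ obliges you to prove the $(\Rightarrow)$ half, which is precisely the nontrivial construction of Theorem~\ref{thm:Xn}, and your justification of it via Lemma~\ref{lem:extract} has a small hole: as stated, that lemma only promises a discrete $F$ satisfying property (*) and says nothing about $|F|$, so once you discard (*) its literal conclusion is vacuous. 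You must instead appeal to the proof of Theorem~\ref{thm:Xn} to see that the family $\{\langle f_\alpha,g_\alpha\rangle:\alpha<\lambda\}$ is faithfully indexed, and also say a word about recovering the \emph{Property} (that $d_\alpha$ has $n$ distinct coordinates) by passing to a smaller power, since your $\lambda$-sized discrete set in $Z^n$ need not come with $n$ minimal. Your handling of the cofinality cases and of the finite case is fine; the argument is simply longer than necessary, and the shortcut you missed is the one-line remark that point-separating implies size at least $iw$.
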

\begin{proof}
Fix a homeomorphism $\phi: C^2_p(X)\to C^2_p(Y)$ and a discrete point-separating subspace $D$ of $C^2_p(X)$.

Assume, first, that $|D|$ is finite. Then $iw(X)=\omega$. Hence $iw(Y)=\omega$. Since $Y$ is infinite, it contains a an infinite countable subspace. By Theorem \ref{thm:criterionsigma} , $C^2_p(Y)$ contains a discrete point-separating subspace.

We now assume that $|D|$ is infinite. Then $|D|\geq iw(X)$. Therefore, $|\phi (D)|\geq iw(Y)$. By Lemma \ref{lem:functionspace0}, $s(\sigma_Y)\geq |\phi(D)|\geq iw(Y)$.  By Theorem \ref{thm:criterionsigma} , $C^2_p(Y)$ contains a discrete point-separating subspace.
\end{proof}

\par\bigskip\noindent
Repeating the argument of Theorem \ref{thm:tequiv0dim}, we obtain the following.
\par\bigskip\noindent
\begin{thm}\label{thm:tequiv} Let $X$ and $Y$ be zero-dimensional and $t$-equivalent.
If $C_p(X)$ has a discrete point-separating subspace, then so does $C_p(Y)$.
\end{thm}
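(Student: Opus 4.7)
The plan is a direct transcription of the proof of Theorem \ref{thm:tequiv0dim} from the $C_p^2$ setting to the $C_p$ setting, substituting Theorem \ref{thm:criterionsigma0dim} for Theorem \ref{thm:criterionsigma} and invoking zero-dimensionality of $Y$ at the final step. I would fix a homeomorphism $\phi\colon C_p(X)\to C_p(Y)$ and a discrete point-separating subspace $D$ of $C_p(X)$, and split into the two cases $|D|<\omega$ and $|D|\geq\omega$, exactly as in Theorem \ref{thm:tequiv0dim}.

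If $D$ is finite, then the evaluation diagonal $X\to\mathbb R^{|D|}$ is a continuous injection witnessing $iw(X)=\omega$. Since $t$-equivalence preserves $iw$ by \cite[I.1.6]{Arh}, also $iw(Y)=\omega$. Because $Y$ is infinite we get $s(\sigma_Y)\geq\omega=iw(Y)$, and then zero-dimensionality of $Y$ combined with Theorem \ref{thm:criterionsigma0dim} yields a discrete point-separating subspace of $C_p(Y)$.

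If $D$ is infinite, then $|D|\geq iw(X)=iw(Y)$, and $\phi(D)$ is a discrete subspace of $C_p(Y)$ of the same cardinality. Lemma \ref{lem:functionspace0}, although stated for $C_p^2$, applies just as well to $C_p$ (the proof simply omits the second standard neighborhood $V_g$), so I can apply it to $\phi(D)$ to conclude $s(\sigma_Y)\geq|\phi(D)|\geq iw(Y)$. A final appeal to Theorem \ref{thm:criterionsigma0dim} finishes the proof. There is no genuine obstacle here, which matches the paper's remark that the argument is ``identical'' to that of Theorem \ref{thm:tequiv0dim}; the only technical point worth confirming is that Lemma \ref{lem:functionspace0} transfers to the single-factor setting, and this is essentially free from its proof.
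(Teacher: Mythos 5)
Your proposal is correct and follows exactly the route the paper intends: the paper's proof of Theorem \ref{thm:tequiv} is literally ``repeat the argument of Theorem \ref{thm:tequiv0dim},'' replacing Theorem \ref{thm:criterionsigma} by Theorem \ref{thm:criterionsigma0dim}, which is what you do. Your side remark that Lemma \ref{lem:functionspace0} transfers to the single-factor case is right and essentially free, since $C_p(X)$ embeds into $C_p^2(X)$ via $f\mapsto\langle f,f\rangle$.
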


\par\bigskip
While being a discrete subspace is already a nice property, it would be interesting to know when $C_p(X)$ or its finite power has a discrete point-separating subspace which is in addition  a subgroup. Note that any discrete subgroup is closed. In addition, $C_p(X)$ can be covered by countably many shifts of any neighborhood of zero-function. Therefore, any discrete subgroup of $C_p(X)$ is countable. These observations lead to the following question.

\par\bigskip\noindent
\begin{que}
Let $X$ be a separable metric space. Is it true that $C_p(X)$ has a discrete point-separating subgroup?
\end{que}

\par\bigskip\noindent
It is worth noting that  separable metric spaces  have many  pretty point-separating subspaces as backed up by the next two statement.

\par\bigskip\noindent
\begin{thm}\label{thm:01_1}
$C_p(X)$ has a point-separating subset homeomorphic to $[0,1]$ if and only $X$ admits a continuous injection into $\mathbb R^\omega$.
\end{thm}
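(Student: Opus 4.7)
My plan is to prove each direction separately, both times exploiting the fact that a continuous map out of $[0,1]$ is determined by its restriction to any countable dense subset.

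For necessity, let $h\colon[0,1]\to H$ be a homeomorphism onto the given point-separating arc $H\subset C_p(X)$, and fix an enumeration $\{q_n:n\in\omega\}$ of a dense countable subset of $[0,1]$. I would define $\Psi\colon X\to\mathbb R^\omega$ by $\Psi(x)=(h(q_n)(x))_{n\in\omega}$. Continuity of $\Psi$ is immediate because each coordinate is evaluation at $x$ of the fixed function $h(q_n)\in C_p(X)$. For injectivity, note that continuity of $h$ into $C_p(X)$ makes $t\mapsto h(t)(x)-h(t)(y)$ continuous on $[0,1]$; if $\Psi(x)=\Psi(y)$, this difference vanishes on the dense set $\{q_n\}$ and hence on all of $[0,1]$, so every $f\in H$ satisfies $f(x)=f(y)$, and point-separation of $H$ forces $x=y$.

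For sufficiency, starting from a continuous injection $j=(j_n)\colon X\to\mathbb R^\omega$, I would first fix a base point $x_0\in X$ and normalise each coordinate by
\[
\tilde j_n(x)\;=\;2^{-n-1}\cdot\frac{j_n(x)-j_n(x_0)}{1+|j_n(x)-j_n(x_0)|},
\]
so that $\tilde j_n$ is continuous with $\tilde j_n(x_0)=0$, $|\tilde j_n|\le 2^{-n-1}$, and the family still separates points (because $y\mapsto y/(1+|y|)$ is injective on $\mathbb R$). Next I would pick pairwise disjoint closed intervals $I_n\subset(0,1)$ with $\max I_n\to 0$, together with continuous triangular bumps $\alpha_n\colon[0,1]\to[0,1]$ supported on $I_n$ and equal to $1$ at the midpoint $t_n$ of $I_n$, and set
\[
f_t(x)\;=\;t\;+\;\sum_{n\ge 1}\alpha_n(t)\,\tilde j_n(x),\qquad \psi(t)=f_t.
\]
Since the $I_n$ are disjoint, at each $t$ at most one summand contributes, so every $f_t$ is continuous on $X$ and $t\mapsto f_t(x)$ is continuous on $[0,1]$; the only delicate point is $t\to 0$, where the bound $2^{-n-1}$ together with $\max I_n\to 0$ forces convergence to $f_0(x)=0$. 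Evaluating at $x_0$ gives $\psi(t)(x_0)=t$, so $\psi$ is injective; and if $x\neq y$, choosing $n$ with $\tilde j_n(x)\neq\tilde j_n(y)$ yields $\psi(t_n)(x)-\psi(t_n)(y)=\tilde j_n(x)-\tilde j_n(y)\neq 0$, so $\psi([0,1])$ is point-separating. Compactness of $[0,1]$ promotes $\psi$ to a homeomorphism onto its image.

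The main obstacle is the sufficiency direction: the single parameter $t$ must simultaneously make $t\mapsto f_t$ globally injective into $C_p(X)$ and make $\{f_t:t\in[0,1]\}$ separate every pair of points of $X$. These demands pull against each other, and I would resolve the tension by splitting them into independent channels inside the formula for $f_t$: the linear drift in $t$, detected at the distinguished point $x_0$, secures injectivity of the parametrisation, while the disjointly supported bumps reproduce each $\tilde j_n$ (up to a known scalar) at its peak $t_n$ and thereby carry point-separation of $X$.
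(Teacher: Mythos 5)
Your proof is correct, and while your necessity argument coincides with the paper's, your sufficiency argument takes a genuinely different route. For necessity, both you and the paper reduce to the fact that a countable dense subset of an arc $F\subset C_p(X)$ is still point-separating (you make explicit the continuity of $t\mapsto h(t)(x)-h(t)(y)$ and its vanishing on a dense set, where the paper simply asserts the density claim) and then use that countable family as the coordinates of a continuous injection into $\mathbb R^\omega$. For sufficiency the paper dualizes: it invokes the Gul'ko--Hmyleva theorem that $\mathbb R$ and $[0,1]$ are $t$-equivalent to embed $\mathbb R^\omega=C_p(\omega)$ into $C_p([0,1])$, realizes $X$ as a point-separating image $F\subset C_p([0,1])$ containing the identity, and obtains the arc as the image of the evaluation map $\Psi_F\colon[0,1]\to C_p(F)$ under the dual of $X\to F$, i.e.\ the arc $\{x\mapsto h(x)(t):t\in[0,1]\}$. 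You instead build the arc by hand: bounded renormalized coordinates $\tilde j_n$ vanishing at a base point $x_0$, disjointly supported bumps $\alpha_n$ whose supports accumulate only at $0$, and $f_t=t+\sum_n\alpha_n(t)\tilde j_n$, where the linear drift read off at $x_0$ gives injectivity of $t\mapsto f_t$ and the peaks $t_n$ recover each $\tilde j_n$ for point-separation, with compactness of $[0,1]$ upgrading the continuous injection to an embedding. Your route is elementary and self-contained --- it avoids the nontrivial external ingredient (Gul'ko--Hmyleva) entirely --- at the cost of an explicit summation formula and the continuity check at $t=0$, which your bounds $|\tilde j_n|\le 2^{-n-1}$ together with $\max I_n\to 0$ do handle correctly.
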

\begin{proof} To prove necessity, let $F\subset C_p(X)$ a point-separating family homeomorphic to $[0,1]$. Then any dense subset of $F$ is point-separating too. Therefore, $C_p(X)$ has a countable point-separating family. Therefore, $X$ continuously injects into $\mathbb R^\omega$.

To prove sufficiency we need the following claim.
\par\medskip\noindent
{\it Claim. $\mathbb R^\omega$ embeds into $C_p([0,1])$.}
\par\noindent
To prove the claim, note that $C_p(\omega) = \mathbb R^\omega$ embeds into $C_p(\mathbb R)$ since $\omega$ is closed in $\mathbb R$. By Gulko-Hmyleva  theorem \cite{GH} that $\mathbb R$ and $[0,1]$ are $t$-equivalent, we conclude that, $\mathbb R^\omega$ embeds into $C_p([0,1])$. The claim is proved.
\par\medskip\noindent
By Claim $X$ injects  into $C_p([0,1])$. Let $F$ ne the image of such an injection. Due to homogeneity we may assume that the identity function is in $F$.Therefore, $F$ generates the topology of $[0,1]$. Consider the evaluation map  the evaluation function $\Psi_F:[0,1]\to C_p(F)$. Since $F$ generates the topology of $[0,1]$ , we conclude that $\Psi_F([0,1])$ genberate the topology of $F$. If $h:X\to F$ is a continuous bijection then the map $H: C_p(F)\to C_p(X)$ is a continuous injection, where $H(f) =hf$. Clearly, $H(\Psi_F([0,1]))=[0,1]$ is point separating.
\end{proof}

\par\bigskip\noindent
\begin{thm}\label{thm:01_2}
Let $X$ be a separable metric space. Then $C_p(X)$ has a topology-generating subspace homeomorphic to $[0,1]$.
\end{thm}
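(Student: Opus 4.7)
The plan is to follow the template of the proof of Theorem \ref{thm:01_1}, but strengthening every step from "continuous injection" and "point-separating" to "topological embedding" and "topology-generating". The only additional input needed is that a separable metric $X$ embeds topologically (not just injects continuously) into $\mathbb R^\omega$.

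I would first embed $X$ topologically into $C_p([0,1])$: compose the standard embedding $X\hookrightarrow \mathbb R^\omega$ with the embedding $\mathbb R^\omega\hookrightarrow C_p([0,1])$ extracted from the Claim in the proof of Theorem \ref{thm:01_1}. Call the image $F$ and the resulting homeomorphism $h:X\to F$. Using that $C_p([0,1])$ is a topological group, translate $F$ so that $id_{[0,1]}\in F$; this preserves its homeomorphism type. In particular, $F$ now generates the topology of $[0,1]$. Form the evaluation map $\Psi_F:[0,1]\to C_p(F)$, $\Psi_F(t)(f)=f(t)$, which is then an embedding, and pull it back through the homeomorphism $H:C_p(F)\to C_p(X)$, $H(\varphi)=\varphi\circ h$, to obtain a copy of $[0,1]$ inside $C_p(X)$.

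The one point that actually needs checking, and the main obstacle, is that $H(\Psi_F([0,1]))$ is topology-generating on $X$, not merely point-separating as in Theorem \ref{thm:01_1}. A direct unpacking gives $H(\Psi_F(t))(x)=[h(x)](t)$, so the preimage under $H(\Psi_F(t))$ of an open $V\subset\mathbb R$ is $h^{-1}\{f\in F:f(t)\in V\}$. Since the sets $\{f\in F:f(t)\in V\}$ form a subbasis for the topology of $F$ (inherited from $C_p([0,1])=\mathbb R^{[0,1]}$) and $h$ is a homeomorphism, their $h$-preimages form a subbasis for the topology of $X$. This identifies $H(\Psi_F([0,1]))$ as a topology-generating copy of $[0,1]$ in $C_p(X)$. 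No other step presents any difficulty, so the rest of the argument is bookkeeping essentially identical to that of Theorem \ref{thm:01_1}.
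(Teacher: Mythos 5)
Your proposal is correct and takes essentially the same route as the paper: embed $X$ into $C_p([0,1])$ with $id_{[0,1]}$ in the image $F$, and transport the evaluation copy $\Psi_F([0,1])\subset C_p(F)$ back to $C_p(X)$ via the homeomorphism induced by $h$. Your subbasis computation is just a careful unpacking of the paper's one-line assertion that $\Psi_F([0,1])$ generates the topology of $F$.
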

\begin{proof}
Embed $X$ into $C_p([0,1])$ so that the image $F$ contains the identity map. 
The evaluation function $\Psi_F:[0,1]\to C_p(F)$. Since $F$ generates the topology of $[0,1]$ and therefore $\Psi_F([0,1])$ generates the topology of $F$. Since $F=X$, we conclude that $[0,1]=\Psi_F([0,1])$ generates the topology of  $F=X$. 
\end{proof}

\par\bigskip\noindent
Note that Theorem \ref{thm:01_2} cannot be reversed. Indeed, $[0,1]$ generates the topology of $C_p([0,1])$ but the latter is not metrizable.

\par\bigskip
We would like to finish with two problems that are naturally prompted by our study.
\par\bigskip\noindent
\begin{que}
Characterize spaces $X$ for which $C_p(X)$ has a closed discrete point-separating subset.
\end{que}
\par\bigskip\noindent
\begin{que}
Characterize spaces $X$ for which $C_p(X)$ has a (closed) discrete topology-generating subset.
\end{que}

\par\bigskip\noindent
At last, the unattained goal of the paper is left as the following question.
\par\bigskip\noindent
\begin{que}
Assume that $C_p(X)$ has a discrete subspace of size $iw(X)$. Is it true that $C_p(X)$ has a discrete point-separating set?
\end{que}

\end{document}